\theoremstyle{plain}
\newtheorem{theorem}{Theorem}
\numberwithin{figure}{section}
\newtheorem{lemma}{Lemma}
\numberwithin{lemma}{section}
\newtheorem{proposition}[lemma]{Proposition}
\theoremstyle{definition}
\newtheorem{definition}[lemma]{Definition}
\theoremstyle{remark}
\newtheorem*{remark}{Remark}
\newtheorem*{example*}{Example}
\begin{document}\begin{CJK*}{UTF8}{min}
\title[Link diagram encoding]{A method of encoding generalized link diagrams}
%\title{A method of encoding generalized link diagrams}
\author{Chad Musick}
\subjclass[2010]{57M25}

\begin{abstract}We describe a method of encoding various types of link diagrams, including those with classical, flat, rigid, welded, and virtual crossings. We show that this method may be used to encode link diagrams, up to equivalence, in a notation whose length is a cubic function of the number of 'riser marks'. For classical knots, the minimal number of such marks is twice the bridge index, and a classical knot diagram in minimal bridge form with bridge index $b$ may be encoded in space $\mathcal{O}(b^2)$. A set of moves on the notation is defined. As a demonstration of the utility of the notation we give another proof that the Kishino virtual knot is non-classical.
\end{abstract}
%\ccode{Mathematics Subject Classification 2010: 57M25}

\maketitle

\section{Introduction}
A link $L$ may be viewed as an immersion of circles into some space $M$, the exact $M$ depending on the type of the link. Typically, we are interested in equivalence classes of links given by ambient isotopies in $M$; however, we often categorize these equivalence classes by looking at planar projections and defining equivalence by referring to a set of diagrams reachable from any good projection by a set of local changes.

In this article, we strive to achieve a balance between the global nature of ambient isotopies in space and the localized nature of diagrammatic moves. In order to do this for a specific link, we adopt a method that relies on a combinatorial sentence and looks at moves on partial diagrams arising from this sentence. A method is given for forming a combinatorial sentence from a diagram, and a corresponding method is given for drawing a diagram from the combinatorial sentence. This gives both a method of encoding links and a method of making moves on links. We also show how the new notation may be given in a fixed-length form. This is particularly helpful for those diagrams that have a large number of crossings or points around which the winding number of the curve is high.

Two applications of this notation and the associated moves are given. We provide an elementary proof that the Kishino virtual knot is non-classical, and we give a $3$-bridge presentation of the $11$-crossing knot $k11a1$. 

\section{Standard construction}

We wish to demonstrate a method of encoding link diagrams generally. Historically, link diagrams referred to a projection to $S^2$ or $\mathbb{R}^2$ of a union of disjoint circles embedded in $S^3$ or $\mathbb{R}^3$. However, our primary interest is in the diagrams as objects in their own right. An overview of the subject is given in Nelson \cite{nelson}. We therefore make the following definition.

\begin{definition}[link diagram]A \emph{link diagram} is a planar graph $G$ in $S^2$ that meets the following criteria:
\begin{enumerate}
\item each vertex is $4$-valent, and
\item in a neighborhood around each vertex, there is a pattern $($in, in, out, out$)$ to the orientation of edges, and
\item each of the vertices has been assigned one of the set of crossing types $\{$ classical +, classical -, flat, virtual, welded (up), welded (down), rigid. $\}$
\end{enumerate}
\end{definition}

\subsection{Crossing types}

We handle $6$ distinct crossing types, each of which has its own diagrammatic rules. These crossing types are classical, flat, rigid, welded (both up and down varieties), and virtual. Figure \ref{F:crossings} shows the graphical notation we use for each type. All strands are oriented, but orientations are only shown where they are relevant.

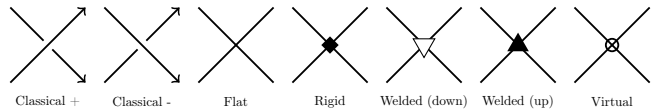
\begin{figure}[ht]
\begin{center}
\begin{tikzpicture}[every node/.style={scale=.5}, scale=.5]

\begin{scope}
	\draw[line width=.7pt,->] (-1, 1) -- (1, -1);
	\node[circle, fill=white,draw=none] at (0, 0) {};
	\draw[line width=.7pt,->] (-1, -1) -- (1, 1);
	\draw[line width=.7pt,->] (1.5, -1) -- (3.5, 1);
	\node[circle, fill=white,draw=none] at (2.5, 0) {};
	\draw[line width=.7pt,->] (1.5, 1) -- (3.5, -1);
	\node[rectangle, fill=none] at (0, -1.5) {Classical +};
	\node[rectangle, fill=none] at (2.5, -1.5) {Classical -};
\end{scope}

\begin{scope}[cm={1,0,0,1,(5,0)}]
	\draw[line width=.7pt] (-1, -1) -- (1, 1) (-1, 1) -- (1, -1);
	\node[rectangle, fill=none] at (0, -1.5) {Flat};
\end{scope}

\begin{scope}[cm={1,0,0,1,(7.5,0)}]
	\draw[line width=.7pt] (-1, -1) -- (1, 1) (-1, 1) -- (1, -1);
	\node[fill=black,diamond] at (0, 0) {};
	\node[rectangle, fill=none] at (0, -1.5) {Rigid};
\end{scope}

\begin{scope}[cm={1,0,0,1,(10,0)}]
	\draw[line width=.7pt] (-1, -1) -- (1, 1) (-1, 1) -- (1, -1);
	\node[fill=white, regular polygon, regular polygon sides=3, rotate=180, draw=black] at (0, 0) {};
	\node[rectangle, fill=none] at (0, -1.5) {Welded (down)};
\end{scope}

\begin{scope}[cm={1,0,0,1,(12.5,0)}]
	\draw[line width=.7pt] (-1, -1) -- (1, 1) (-1, 1) -- (1, -1);
	\node[fill=black,regular polygon, regular polygon sides=3] at (0, 0) {};
	\node[rectangle, fill=none] at (0, -1.5) {Welded (up)};
\end{scope}

\begin{scope}[cm={1,0,0,1,(15,0)}]
	\draw[line width=.7pt] (-1, -1) -- (1, 1) (-1, 1) -- (1, -1);
	\node[fill=none,line width=.7pt, draw=black,circle] at (0, 0) {};
	\node[rectangle, fill=none] at (0, -1.5) {Virtual};
\end{scope}

\end{tikzpicture}
\caption{The different types of crossings}\label{F:crossings}
\end{center}
\end{figure}

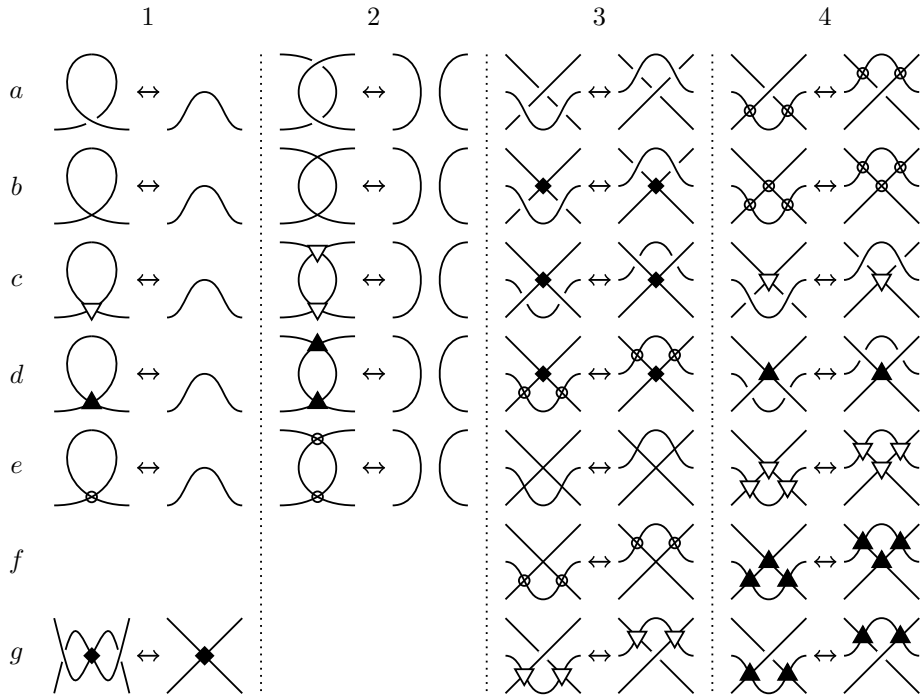
\begin{figure}[ht]
\begin{center}
\begin{tikzpicture}[every node/.style={scale=.5}, scale=.5, line width=.7pt]

	\begin{scope}[cm={1,0,0,1,(-1, 0)}]
	\node[style={scale=2}] at (0, 1) {$a$};
	\node[style={scale=2}] at (0, -1.5) {$b$};
	\node[style={scale=2}] at (0, -4) {$c$};
	\node[style={scale=2}] at (0, -6.5) {$d$};
	\node[style={scale=2}] at (0, -9) {$e$};
	\node[style={scale=2}] at (0, -11.5) {$f$};
	\node[style={scale=2}] at (0, -14) {$g$};
	\end{scope}

	\begin{scope}[cm={1,0,0,1,(-.5, 2)}]
	\node[style={scale=2}] at (3, 1) {$1$};
	\node[style={scale=2}] at (9, 1) {$2$};
	\node[style={scale=2}] at (15, 1) {$3$};
	\node[style={scale=2}] at (21, 1) {$4$};
	\end{scope}
	
	\begin{scope}[cm={1,0,0,1,(0,0)}]
	\begin{scope}[cm={1,0,0,1,(0,0)}]
	\draw[name path=a1, draw=none] (0, 0) .. controls (2, 0) and (2, 2) .. (1, 2);
	\draw[name path=a2, draw=none] (1, 2) .. controls (0, 2) and (0, 0) .. (2, 0);
	\draw (0, 0) .. controls (2, 0) and (2, 2) .. (1, 2);
	\draw[name intersections={of=a1 and a2}] (intersection-2) node[circle, draw=white,fill=white,inner sep=1mm] {};
	\draw (1, 2) .. controls (0, 2) and (0, 0) .. (2, 0);
	\draw (3, 0) .. controls (3.5, 0) and (3.5, 1) .. (4, 1) .. controls (4.5, 1) and (4.5, 0) .. (5, 0);
	\draw[<->] (2.2, 1) -- (2.8, 1);
	\end{scope}
	\begin{scope}[cm={1,0,0,1,(0,-2.5)}]
	\draw (1, 2) .. controls (0, 2) and (0, 0) .. (2, 0);
	\draw (0, 0) .. controls (2, 0) and (2, 2) .. (1, 2);
	\draw (3, 0) .. controls (3.5, 0) and (3.5, 1) .. (4, 1) .. controls (4.5, 1) and (4.5, 0) .. (5, 0);
	\draw[<->] (2.2, 1) -- (2.8, 1);
	\end{scope}
	\begin{scope}[cm={1,0,0,1,(0,-5)}]
	\draw[name path=a1, draw=none] (0, 0) .. controls (2, 0) and (2, 2) .. (1, 2);
	\draw[name path=a2, draw=none] (1, 2) .. controls (0, 2) and (0, 0) .. (2, 0);
	\draw (1, 2) .. controls (0, 2) and (0, 0) .. (2, 0);
	\draw (0, 0) .. controls (2, 0) and (2, 2) .. (1, 2);
	\draw (3, 0) .. controls (3.5, 0) and (3.5, 1) .. (4, 1) .. controls (4.5, 1) and (4.5, 0) .. (5, 0);
	\draw[name intersections={of=a1 and a2}] (intersection-2) node[regular polygon, regular polygon sides=3, rotate=180, draw=black,fill=white,inner sep=1mm] {};
	\draw[<->] (2.2, 1) -- (2.8, 1);
	\end{scope}
	\begin{scope}[cm={1,0,0,1,(0,-7.5)}]
	\draw[name path=a1, draw=none] (0, 0) .. controls (2, 0) and (2, 2) .. (1, 2);
	\draw[name path=a2, draw=none] (1, 2) .. controls (0, 2) and (0, 0) .. (2, 0);
	\draw (1, 2) .. controls (0, 2) and (0, 0) .. (2, 0);
	\draw[name intersections={of=a1 and a2}] (intersection-2) node[regular polygon, regular polygon sides=3, draw=black,fill=black,inner sep=1mm] {};
	\draw (0, 0) .. controls (2, 0) and (2, 2) .. (1, 2);
	\draw (3, 0) .. controls (3.5, 0) and (3.5, 1) .. (4, 1) .. controls (4.5, 1) and (4.5, 0) .. (5, 0);
	\draw[<->] (2.2, 1) -- (2.8, 1);
	\end{scope}
	\begin{scope}[cm={1,0,0,1,(0,-10)}]
	\draw[name path=a1, draw=none] (0, 0) .. controls (2, 0) and (2, 2) .. (1, 2);
	\draw[name path=a2, draw=none] (1, 2) .. controls (0, 2) and (0, 0) .. (2, 0);
	\draw (1, 2) .. controls (0, 2) and (0, 0) .. (2, 0);
	\draw[name intersections={of=a1 and a2}] (intersection-2) node[circle, draw=black,fill=none,inner sep=1mm] {};
	\draw (0, 0) .. controls (2, 0) and (2, 2) .. (1, 2);
	\draw (3, 0) .. controls (3.5, 0) and (3.5, 1) .. (4, 1) .. controls (4.5, 1) and (4.5, 0) .. (5, 0);
	\draw[<->] (2.2, 1) -- (2.8, 1);
	\end{scope}
	\begin{scope}[cm={1,0,0,1,(0,-15)}]
	\draw[name path=a1, draw=none] (0, 0) .. controls (.5, 2) .. (1, 1);
	\draw[name path=a2, draw=none] (0, 2) .. controls (.5, 0) .. (1, 1);
	\draw[name path=a3, draw=none] (1, 1) .. controls (1.5, 0) .. (2, 2);
	\draw[name path=a4, draw=none] (1, 1) .. controls (1.5, 2) .. (2, 0);
	\draw (0, 0) .. controls (.5, 2) .. (1, 1);
	\draw[name intersections={of=a1 and a2}] (intersection-1) node[circle, draw=white, fill=white, inner sep=1mm] {};
	\draw (0, 2) .. controls (.5, 0) .. (1, 1);
	\draw (1, 1) .. controls (1.5, 2) .. (2, 0);
	\draw[name intersections={of=a3 and a4}] (intersection-2) node[circle, draw=white, fill=white, inner sep=1mm] {}
		(intersection-1) node[diamond, draw=black, fill=black, inner sep=1mm] {};
	\draw (1, 1) .. controls (1.5, 0) .. (2, 2);
	\draw[<->] (2.2, 1) -- (2.8, 1);
	\draw[name path=a1] (3, 0) -- (5, 2);
	\draw[name path=a2] (3, 2) -- (5, 0);
	\draw[name intersections={of=a1 and a2}] (intersection-1) node[diamond, fill=black, draw=black] {};
	\end{scope}
	\end{scope}
	
	\begin{scope}[cm={1,0,0,1,(6,0)}]
	\begin{scope}[cm={1,0,0,1,(0,0)}]
	\draw[name path=a1, draw=none] (0, 0) .. controls (2, 0) and (2, 2) .. (0, 2);
	\draw[name path=a2, draw=none] (2, 0) .. controls (0, 0) and (0, 2) .. (2, 2);
	\draw (0, 0) .. controls (2, 0) and (2, 2) .. (0, 2);
	\draw[name intersections={of=a1 and a2}] (intersection-1) node[circle, draw=white, fill=white, inner sep=1mm] {}
		(intersection-2) node[circle, draw=white, fill=white, inner sep=1mm] {};
	\draw (2, 0) .. controls (0, 0) and (0, 2) .. (2, 2);
	\draw[<->] (2.2, 1) -- (2.8, 1);
	\draw (3, 0) .. controls (4, 0) and (4, 2) .. (3, 2);
	\draw (5, 0) .. controls (4, 0) and (4, 2) .. (5, 2);
	\end{scope}
	\begin{scope}[cm={1,0,0,1,(0,-2.5)}]
	\draw[name path=a1, draw=none] (0, 0) .. controls (2, 0) and (2, 2) .. (0, 2);
	\draw[name path=a2, draw=none] (2, 0) .. controls (0, 0) and (0, 2) .. (2, 2);
	\draw (2, 0) .. controls (0, 0) and (0, 2) .. (2, 2);
	\draw (0, 0) .. controls (2, 0) and (2, 2) .. (0, 2);
	\draw[<->] (2.2, 1) -- (2.8, 1);
	\draw (3, 0) .. controls (4, 0) and (4, 2) .. (3, 2);
	\draw (5, 0) .. controls (4, 0) and (4, 2) .. (5, 2);
	\end{scope}
	\begin{scope}[cm={1,0,0,1,(0,-5)}]
	\draw[name path=a1, draw=none] (0, 0) .. controls (2, 0) and (2, 2) .. (0, 2);
	\draw[name path=a2, draw=none] (2, 0) .. controls (0, 0) and (0, 2) .. (2, 2);
	\draw (2, 0) .. controls (0, 0) and (0, 2) .. (2, 2);
	\draw (0, 0) .. controls (2, 0) and (2, 2) .. (0, 2);
	\draw[name intersections={of=a1 and a2}] (intersection-1) node[regular polygon, regular polygon sides=3, rotate=180, draw=black, fill=white, inner sep=1mm] {}
		(intersection-2) node[regular polygon, regular polygon sides=3, rotate=180, draw=black, fill=white, inner sep=1mm] {};
	\draw[<->] (2.2, 1) -- (2.8, 1);
	\draw (3, 0) .. controls (4, 0) and (4, 2) .. (3, 2);
	\draw (5, 0) .. controls (4, 0) and (4, 2) .. (5, 2);
	\end{scope}
	\begin{scope}[cm={1,0,0,1,(0,-7.5)}]
	\draw[name path=a1, draw=none] (0, 0) .. controls (2, 0) and (2, 2) .. (0, 2);
	\draw[name path=a2, draw=none] (2, 0) .. controls (0, 0) and (0, 2) .. (2, 2);
	\draw (2, 0) .. controls (0, 0) and (0, 2) .. (2, 2);
	\draw (0, 0) .. controls (2, 0) and (2, 2) .. (0, 2);
	\draw[name intersections={of=a1 and a2}] (intersection-1) node[regular polygon, regular polygon sides=3, draw=black, fill=black, inner sep=1mm] {}
		(intersection-2) node[regular polygon, regular polygon sides=3, draw=black, fill=black, inner sep=1mm] {};
	\draw[<->] (2.2, 1) -- (2.8, 1);
	\draw (3, 0) .. controls (4, 0) and (4, 2) .. (3, 2);
	\draw (5, 0) .. controls (4, 0) and (4, 2) .. (5, 2);
	\end{scope}
	\begin{scope}[cm={1,0,0,1,(0,-10)}]
	\draw[name path=a1, draw=none] (0, 0) .. controls (2, 0) and (2, 2) .. (0, 2);
	\draw[name path=a2, draw=none] (2, 0) .. controls (0, 0) and (0, 2) .. (2, 2);
	\draw (2, 0) .. controls (0, 0) and (0, 2) .. (2, 2);
	\draw (0, 0) .. controls (2, 0) and (2, 2) .. (0, 2);
	\draw[name intersections={of=a1 and a2}] (intersection-1) node[circle, draw=black, fill=none, inner sep=1mm] {}
		(intersection-2) node[circle, draw=black, fill=none, inner sep=1mm] {};
	\draw[<->] (2.2, 1) -- (2.8, 1);
	\draw (3, 0) .. controls (4, 0) and (4, 2) .. (3, 2);
	\draw (5, 0) .. controls (4, 0) and (4, 2) .. (5, 2);
	\end{scope}
	\end{scope}
	
	\begin{scope}[cm={1,0,0,1,(12,0)}]
	\begin{scope}[cm={1,0,0,1,(0,0)}]
	\draw[name path=a1, draw=none] (0, 2) -- (2, 0);
	\draw[name path=a2, draw=none] (0, 0) -- (2, 2);
	\draw[name path=a3, draw=none] (0, 1) .. controls (.5, 1) and (.5, 0) .. (1, 0) .. controls (1.5, 0) and (1.5, 1) .. (2, 1);
	\draw (0, 2) -- (2, 0);
	\draw[name intersections={of=a1 and a2}] (intersection-1) node[circle, draw=white, fill=white, inner sep=1mm] {};
	\draw (0, 0) -- (2, 2);
	\draw[name intersections={of=a2 and a3}] (intersection-1) node[circle, draw=white, fill=white, inner sep=1mm] {};
	\draw[name intersections={of=a1 and a3}] (intersection-1) node[circle, draw=white, fill=white, inner sep=1mm] {};
	\draw (0, 1) .. controls (.5, 1) and (.5, 0) .. (1, 0) .. controls (1.5, 0) and (1.5, 1) .. (2, 1);
	\draw[name path=a1, draw=none] (3, 2) -- (5, 0);
	\draw[name path=a2, draw=none] (3, 0) -- (5, 2);
	\draw[name path=a3, draw=none] (3, 1) .. controls (3.5, 1) and (3.5, 2) .. (4, 2) .. controls (4.5, 2) and (4.5, 1) .. (5, 1);
	\draw (3, 2) -- (5, 0);
	\draw[name intersections={of=a1 and a2}] (intersection-1) node[circle, draw=white, fill=white, inner sep=1mm] {};
	\draw (3, 0) -- (5, 2);
	\draw[name intersections={of=a2 and a3}] (intersection-1) node[circle, draw=white, fill=white, inner sep=1mm] {};
	\draw[name intersections={of=a1 and a3}] (intersection-1) node[circle, draw=white, fill=white, inner sep=1mm] {};
	\draw (3, 1) .. controls (3.5, 1) and (3.5, 2) .. (4, 2) .. controls (4.5, 2) and (4.5, 1) .. (5, 1);
	\draw[<->] (2.2, 1) -- (2.8, 1);
	\end{scope}
	\begin{scope}[cm={1,0,0,1,(0,-2.5)}]
	\draw[name path=a1, draw=none] (0, 2) -- (2, 0);
	\draw[name path=a2, draw=none] (0, 0) -- (2, 2);
	\draw[name path=a3, draw=none] (0, 1) .. controls (.5, 1) and (.5, 0) .. (1, 0) .. controls (1.5, 0) and (1.5, 1) .. (2, 1);
	\draw (0, 2) -- (2, 0);
	\draw[name intersections={of=a1 and a2}] (intersection-1) node[diamond, draw=black, fill=black, inner sep=1mm] {};
	\draw (0, 0) -- (2, 2);
	\draw[name intersections={of=a2 and a3}] (intersection-1) node[circle, draw=white, fill=white, inner sep=1mm] {};
	\draw[name intersections={of=a1 and a3}] (intersection-1) node[circle, draw=white, fill=white, inner sep=1mm] {};
	\draw (0, 1) .. controls (.5, 1) and (.5, 0) .. (1, 0) .. controls (1.5, 0) and (1.5, 1) .. (2, 1);
	\draw[name path=a1, draw=none] (3, 2) -- (5, 0);
	\draw[name path=a2, draw=none] (3, 0) -- (5, 2);
	\draw[name path=a3, draw=none] (3, 1) .. controls (3.5, 1) and (3.5, 2) .. (4, 2) .. controls (4.5, 2) and (4.5, 1) .. (5, 1);
	\draw (3, 2) -- (5, 0);
	\draw[name intersections={of=a1 and a2}] (intersection-1) node[diamond, draw=black, fill=black, inner sep=1mm] {};
	\draw (3, 0) -- (5, 2);
	\draw[name intersections={of=a2 and a3}] (intersection-1) node[circle, draw=white, fill=white, inner sep=1mm] {};
	\draw[name intersections={of=a1 and a3}] (intersection-1) node[circle, draw=white, fill=white, inner sep=1mm] {};
	\draw (3, 1) .. controls (3.5, 1) and (3.5, 2) .. (4, 2) .. controls (4.5, 2) and (4.5, 1) .. (5, 1);
	\draw[<->] (2.2, 1) -- (2.8, 1);
	\end{scope}
	\begin{scope}[cm={1,0,0,1,(0,-5)}]
	\draw[name path=a1, draw=none] (0, 2) -- (2, 0);
	\draw[name path=a2, draw=none] (0, 0) -- (2, 2);
	\draw[name path=a3, draw=none] (0, 1) .. controls (.5, 1) and (.5, 0) .. (1, 0) .. controls (1.5, 0) and (1.5, 1) .. (2, 1);
	\draw (0, 1) .. controls (.5, 1) and (.5, 0) .. (1, 0) .. controls (1.5, 0) and (1.5, 1) .. (2, 1);
	\draw[name intersections={of=a2 and a3}] (intersection-1) node[circle, draw=white, fill=white, inner sep=1mm] {};
	\draw[name intersections={of=a1 and a3}] (intersection-1) node[circle, draw=white, fill=white, inner sep=1mm] {};
	\draw (0, 2) -- (2, 0);
	\draw[name intersections={of=a1 and a2}] (intersection-1) node[diamond, draw=black, fill=black, inner sep=1mm] {};
	\draw (0, 0) -- (2, 2);
	\draw[name path=a1, draw=none] (3, 2) -- (5, 0);
	\draw[name path=a2, draw=none] (3, 0) -- (5, 2);
	\draw[name path=a3, draw=none] (3, 1) .. controls (3.5, 1) and (3.5, 2) .. (4, 2) .. controls (4.5, 2) and (4.5, 1) .. (5, 1);
	\draw (3, 1) .. controls (3.5, 1) and (3.5, 2) .. (4, 2) .. controls (4.5, 2) and (4.5, 1) .. (5, 1);
	\draw[name intersections={of=a2 and a3}] (intersection-1) node[circle, draw=white, fill=white, inner sep=1mm] {};
	\draw[name intersections={of=a1 and a3}] (intersection-1) node[circle, draw=white, fill=white, inner sep=1mm] {};
	\draw (3, 2) -- (5, 0);
	\draw[name intersections={of=a1 and a2}] (intersection-1) node[diamond, draw=black, fill=black, inner sep=1mm] {};
	\draw (3, 0) -- (5, 2);
	\draw[<->] (2.2, 1) -- (2.8, 1);
	\end{scope}
	\begin{scope}[cm={1,0,0,1,(0,-7.5)}]
	\draw[name path=a1, draw=none] (0, 2) -- (2, 0);
	\draw[name path=a2, draw=none] (0, 0) -- (2, 2);
	\draw[name path=a3, draw=none] (0, 1) .. controls (.5, 1) and (.5, 0) .. (1, 0) .. controls (1.5, 0) and (1.5, 1) .. (2, 1);
	\draw (0, 1) .. controls (.5, 1) and (.5, 0) .. (1, 0) .. controls (1.5, 0) and (1.5, 1) .. (2, 1);
	\draw[name intersections={of=a2 and a3}] (intersection-1) node[circle, draw=black, fill=none, inner sep=1mm] {};
	\draw[name intersections={of=a1 and a3}] (intersection-1) node[circle, draw=black, fill=none, inner sep=1mm] {};
	\draw (0, 2) -- (2, 0);
	\draw[name intersections={of=a1 and a2}] (intersection-1) node[diamond, draw=black, fill=black, inner sep=1mm] {};
	\draw (0, 0) -- (2, 2);
	\draw[name path=a1, draw=none] (3, 2) -- (5, 0);
	\draw[name path=a2, draw=none] (3, 0) -- (5, 2);
	\draw[name path=a3, draw=none] (3, 1) .. controls (3.5, 1) and (3.5, 2) .. (4, 2) .. controls (4.5, 2) and (4.5, 1) .. (5, 1);
	\draw (3, 1) .. controls (3.5, 1) and (3.5, 2) .. (4, 2) .. controls (4.5, 2) and (4.5, 1) .. (5, 1);
	\draw[name intersections={of=a2 and a3}] (intersection-1) node[circle, draw=black, fill=none, inner sep=1mm] {};
	\draw[name intersections={of=a1 and a3}] (intersection-1) node[circle, draw=black, fill=none, inner sep=1mm] {};
	\draw (3, 2) -- (5, 0);
	\draw[name intersections={of=a1 and a2}] (intersection-1) node[diamond, draw=black, fill=black, inner sep=1mm] {};
	\draw (3, 0) -- (5, 2);
	\draw[<->] (2.2, 1) -- (2.8, 1);
	\end{scope}
	\begin{scope}[cm={1,0,0,1,(0,-10)}]
	\draw[name path=a1, draw=none] (0, 2) -- (2, 0);
	\draw[name path=a2, draw=none] (0, 0) -- (2, 2);
	\draw[name path=a3, draw=none] (0, 1) .. controls (.5, 1) and (.5, 0) .. (1, 0) .. controls (1.5, 0) and (1.5, 1) .. (2, 1);
	\draw (0, 1) .. controls (.5, 1) and (.5, 0) .. (1, 0) .. controls (1.5, 0) and (1.5, 1) .. (2, 1);
	\draw (0, 2) -- (2, 0);
	\draw (0, 0) -- (2, 2);
	\draw[name path=a1, draw=none] (3, 2) -- (5, 0);
	\draw[name path=a2, draw=none] (3, 0) -- (5, 2);
	\draw[name path=a3, draw=none] (3, 1) .. controls (3.5, 1) and (3.5, 2) .. (4, 2) .. controls (4.5, 2) and (4.5, 1) .. (5, 1);
	\draw (3, 1) .. controls (3.5, 1) and (3.5, 2) .. (4, 2) .. controls (4.5, 2) and (4.5, 1) .. (5, 1);
	\draw (3, 2) -- (5, 0);
	\draw (3, 0) -- (5, 2);
	\draw[<->] (2.2, 1) -- (2.8, 1);
	\end{scope}
	\begin{scope}[cm={1,0,0,1,(0,-12.5)}]
	\draw[name path=a1, draw=none] (0, 2) -- (2, 0);
	\draw[name path=a2, draw=none] (0, 0) -- (2, 2);
	\draw[name path=a3, draw=none] (0, 1) .. controls (.5, 1) and (.5, 0) .. (1, 0) .. controls (1.5, 0) and (1.5, 1) .. (2, 1);
	\draw (0, 1) .. controls (.5, 1) and (.5, 0) .. (1, 0) .. controls (1.5, 0) and (1.5, 1) .. (2, 1);
	\draw[name intersections={of=a2 and a3}] (intersection-1) node[circle, draw=black, fill=none, inner sep=1mm] {};
	\draw[name intersections={of=a1 and a3}] (intersection-1) node[circle, draw=black, fill=none, inner sep=1mm] {};
	\draw (0, 2) -- (2, 0);
	\draw (0, 0) -- (2, 2);
	\draw[name path=a1, draw=none] (3, 2) -- (5, 0);
	\draw[name path=a2, draw=none] (3, 0) -- (5, 2);
	\draw[name path=a3, draw=none] (3, 1) .. controls (3.5, 1) and (3.5, 2) .. (4, 2) .. controls (4.5, 2) and (4.5, 1) .. (5, 1);
	\draw (3, 1) .. controls (3.5, 1) and (3.5, 2) .. (4, 2) .. controls (4.5, 2) and (4.5, 1) .. (5, 1);
	\draw (3, 2) -- (5, 0);
	\draw (3, 0) -- (5, 2);
	\draw[name intersections={of=a2 and a3}] (intersection-1) node[circle, draw=black, fill=none, inner sep=1mm] {};
	\draw[name intersections={of=a1 and a3}] (intersection-1) node[circle, draw=black, fill=none, inner sep=1mm] {};
	\draw[<->] (2.2, 1) -- (2.8, 1);
	\end{scope}
	\begin{scope}[cm={1,0,0,1,(0,-15)}]
	\draw[name path=a1, draw=none] (0, 2) -- (2, 0);
	\draw[name path=a2, draw=none] (0, 0) -- (2, 2);
	\draw[name path=a3, draw=none] (0, 1) .. controls (.5, 1) and (.5, 0) .. (1, 0) .. controls (1.5, 0) and (1.5, 1) .. (2, 1);
	\draw (0, 1) .. controls (.5, 1) and (.5, 0) .. (1, 0) .. controls (1.5, 0) and (1.5, 1) .. (2, 1);
	\draw (0, 2) -- (2, 0);
	\node[draw=white,fill=white,inner sep=1mm] at (1, 1) {};
	\draw (0, 0) -- (2, 2);
	\draw[name intersections={of=a2 and a3}] (intersection-1) node[regular polygon, regular polygon sides=3, rotate=180, draw=black, fill=white, inner sep=1mm] {};
	\draw[name intersections={of=a1 and a3}] (intersection-1) node[regular polygon, regular polygon sides=3, rotate=180, draw=black, fill=white, inner sep=1mm] {};
	\draw[name path=a1, draw=none] (3, 2) -- (5, 0);
	\draw[name path=a2, draw=none] (3, 0) -- (5, 2);
	\draw[name path=a3, draw=none] (3, 1) .. controls (3.5, 1) and (3.5, 2) .. (4, 2) .. controls (4.5, 2) and (4.5, 1) .. (5, 1);
	\draw (3, 1) .. controls (3.5, 1) and (3.5, 2) .. (4, 2) .. controls (4.5, 2) and (4.5, 1) .. (5, 1);
	\draw (3, 2) -- (5, 0);
	\node[draw=white,fill=white,inner sep=1mm] at (4, 1) {};
	\draw (3, 0) -- (5, 2);
	\draw[name intersections={of=a2 and a3}] (intersection-1) node[regular polygon, regular polygon sides=3, rotate=180, draw=black, fill=white, inner sep=1mm] {};
	\draw[name intersections={of=a1 and a3}] (intersection-1) node[regular polygon, regular polygon sides=3, rotate=180, draw=black, fill=white, inner sep=1mm] {};
	\draw[<->] (2.2, 1) -- (2.8, 1);
	\end{scope}
	\end{scope}

	\begin{scope}[cm={1,0,0,1,(18,0)}]
	\begin{scope}[cm={1,0,0,1,(0,0)}]
	\draw[name path=a1, draw=none] (0, 2) -- (2, 0);
	\draw[name path=a2, draw=none] (0, 0) -- (2, 2);
	\draw[name path=a3, draw=none] (0, 1) .. controls (.5, 1) and (.5, 0) .. (1, 0) .. controls (1.5, 0) and (1.5, 1) .. (2, 1);
	\draw (0, 2) -- (2, 0);
	\draw[name intersections={of=a1 and a2}] (intersection-1) node[circle, draw=white, fill=white, inner sep=1mm] {};
	\draw (0, 0) -- (2, 2);
	\draw[name intersections={of=a2 and a3}] (intersection-1) node[circle, draw=black, fill=none, inner sep=1mm] {};
	\draw[name intersections={of=a1 and a3}] (intersection-1) node[circle, draw=black, fill=none, inner sep=1mm] {};
	\draw (0, 1) .. controls (.5, 1) and (.5, 0) .. (1, 0) .. controls (1.5, 0) and (1.5, 1) .. (2, 1);
	\draw[name path=a1, draw=none] (3, 2) -- (5, 0);
	\draw[name path=a2, draw=none] (3, 0) -- (5, 2);
	\draw[name path=a3, draw=none] (3, 1) .. controls (3.5, 1) and (3.5, 2) .. (4, 2) .. controls (4.5, 2) and (4.5, 1) .. (5, 1);
	\draw (3, 2) -- (5, 0);
	\draw[name intersections={of=a1 and a2}] (intersection-1) node[circle, draw=white, fill=white, inner sep=1mm] {};
	\draw (3, 0) -- (5, 2);
	\draw[name intersections={of=a2 and a3}] (intersection-1) node[circle, draw=black, fill=none, inner sep=1mm] {};
	\draw[name intersections={of=a1 and a3}] (intersection-1) node[circle, draw=black, fill=none, inner sep=1mm] {};
	\draw (3, 1) .. controls (3.5, 1) and (3.5, 2) .. (4, 2) .. controls (4.5, 2) and (4.5, 1) .. (5, 1);
	\draw[<->] (2.2, 1) -- (2.8, 1);
	\end{scope}
	\begin{scope}[cm={1,0,0,1,(0,-2.5)}]
	\draw[name path=a1, draw=none] (0, 2) -- (2, 0);
	\draw[name path=a2, draw=none] (0, 0) -- (2, 2);
	\draw[name path=a3, draw=none] (0, 1) .. controls (.5, 1) and (.5, 0) .. (1, 0) .. controls (1.5, 0) and (1.5, 1) .. (2, 1);
	\draw (0, 2) -- (2, 0);
	\draw[name intersections={of=a1 and a2}] (intersection-1) node[circle, draw=black, fill=none, inner sep=1mm] {};
	\draw (0, 0) -- (2, 2);
	\draw[name intersections={of=a2 and a3}] (intersection-1) node[circle, draw=black, fill=none, inner sep=1mm] {};
	\draw[name intersections={of=a1 and a3}] (intersection-1) node[circle, draw=black, fill=none, inner sep=1mm] {};
	\draw (0, 1) .. controls (.5, 1) and (.5, 0) .. (1, 0) .. controls (1.5, 0) and (1.5, 1) .. (2, 1);
	\draw[name path=a1, draw=none] (3, 2) -- (5, 0);
	\draw[name path=a2, draw=none] (3, 0) -- (5, 2);
	\draw[name path=a3, draw=none] (3, 1) .. controls (3.5, 1) and (3.5, 2) .. (4, 2) .. controls (4.5, 2) and (4.5, 1) .. (5, 1);
	\draw (3, 2) -- (5, 0);
	\draw (3, 0) -- (5, 2);
	\draw (3, 1) .. controls (3.5, 1) and (3.5, 2) .. (4, 2) .. controls (4.5, 2) and (4.5, 1) .. (5, 1);
	\draw[name intersections={of=a1 and a2}] (intersection-1) node[circle, draw=black, fill=none, inner sep=1mm] {};
	\draw[name intersections={of=a2 and a3}] (intersection-1) node[circle, draw=black, fill=none, inner sep=1mm] {};
	\draw[name intersections={of=a1 and a3}] (intersection-1) node[circle, draw=black, fill=none, inner sep=1mm] {};
	\draw[<->] (2.2, 1) -- (2.8, 1);
	\end{scope}
	\begin{scope}[cm={1,0,0,1,(0,-5)}]
	\draw[name path=a1, draw=none] (0, 2) -- (2, 0);
	\draw[name path=a2, draw=none] (0, 0) -- (2, 2);
	\draw[name path=a3, draw=none] (0, 1) .. controls (.5, 1) and (.5, 0) .. (1, 0) .. controls (1.5, 0) and (1.5, 1) .. (2, 1);
	\draw (0, 2) -- (2, 0);
	\draw (0, 0) -- (2, 2);
	\draw[name intersections={of=a2 and a3}] (intersection-1) node[circle, draw=white, fill=white, inner sep=1mm] {};
	\draw[name intersections={of=a1 and a3}] (intersection-1) node[circle, draw=white, fill=white, inner sep=1mm] {};
	\draw[name intersections={of=a1 and a2}] (intersection-1) node[regular polygon, regular polygon sides=3, rotate=180, draw=black, fill=white, inner sep=1mm] {};
	\draw (0, 1) .. controls (.5, 1) and (.5, 0) .. (1, 0) .. controls (1.5, 0) and (1.5, 1) .. (2, 1);
	\draw[name path=a1, draw=none] (3, 2) -- (5, 0);
	\draw[name path=a2, draw=none] (3, 0) -- (5, 2);
	\draw[name path=a3, draw=none] (3, 1) .. controls (3.5, 1) and (3.5, 2) .. (4, 2) .. controls (4.5, 2) and (4.5, 1) .. (5, 1);
	\draw (3, 2) -- (5, 0);
	\draw (3, 0) -- (5, 2);
	\draw[name intersections={of=a2 and a3}] (intersection-1) node[circle, draw=white, fill=white, inner sep=1mm] {};
	\draw[name intersections={of=a1 and a3}] (intersection-1) node[circle, draw=white, fill=white, inner sep=1mm] {};
	\draw[name intersections={of=a1 and a2}] (intersection-1) node[regular polygon, regular polygon sides=3, rotate=180, draw=black, fill=white, inner sep=1mm] {};
	\draw (3, 1) .. controls (3.5, 1) and (3.5, 2) .. (4, 2) .. controls (4.5, 2) and (4.5, 1) .. (5, 1);
	\draw[<->] (2.2, 1) -- (2.8, 1);
	\end{scope}
	\begin{scope}[cm={1,0,0,1,(0,-7.5)}]
	\draw[name path=a1, draw=none] (0, 2) -- (2, 0);
	\draw[name path=a2, draw=none] (0, 0) -- (2, 2);
	\draw[name path=a3, draw=none] (0, 1) .. controls (.5, 1) and (.5, 0) .. (1, 0) .. controls (1.5, 0) and (1.5, 1) .. (2, 1);
	\draw (0, 1) .. controls (.5, 1) and (.5, 0) .. (1, 0) .. controls (1.5, 0) and (1.5, 1) .. (2, 1);
	\draw[name intersections={of=a2 and a3}] (intersection-1) node[circle, draw=white, fill=white, inner sep=1mm] {};
	\draw[name intersections={of=a1 and a3}] (intersection-1) node[circle, draw=white, fill=white, inner sep=1mm] {};
	\draw (0, 2) -- (2, 0);
	\draw[name intersections={of=a1 and a2}] (intersection-1) node[regular polygon, regular polygon sides=3, draw=black, fill=black, inner sep=1mm] {};
	\draw (0, 0) -- (2, 2);
	\draw[name path=a1, draw=none] (3, 2) -- (5, 0);
	\draw[name path=a2, draw=none] (3, 0) -- (5, 2);
	\draw[name path=a3, draw=none] (3, 1) .. controls (3.5, 1) and (3.5, 2) .. (4, 2) .. controls (4.5, 2) and (4.5, 1) .. (5, 1);
	\draw (3, 1) .. controls (3.5, 1) and (3.5, 2) .. (4, 2) .. controls (4.5, 2) and (4.5, 1) .. (5, 1);
	\draw[name intersections={of=a2 and a3}] (intersection-1) node[circle, draw=white, fill=white, inner sep=1mm] {};
	\draw[name intersections={of=a1 and a3}] (intersection-1) node[circle, draw=white, fill=white, inner sep=1mm] {};
	\draw (3, 2) -- (5, 0);
	\draw[name intersections={of=a1 and a2}] (intersection-1) node[regular polygon, regular polygon sides=3, draw=black, fill=black, inner sep=1mm] {};
	\draw (3, 0) -- (5, 2);
	\draw[<->] (2.2, 1) -- (2.8, 1);
	\end{scope}
	\begin{scope}[cm={1,0,0,1,(0,-10)}]
	\draw[name path=a1, draw=none] (0, 2) -- (2, 0);
	\draw[name path=a2, draw=none] (0, 0) -- (2, 2);
	\draw[name path=a3, draw=none] (0, 1) .. controls (.5, 1) and (.5, 0) .. (1, 0) .. controls (1.5, 0) and (1.5, 1) .. (2, 1);
	\draw (0, 1) .. controls (.5, 1) and (.5, 0) .. (1, 0) .. controls (1.5, 0) and (1.5, 1) .. (2, 1);
	\draw (0, 2) -- (2, 0);
	\draw (0, 0) -- (2, 2);
	\draw[name intersections={of=a2 and a3}] (intersection-1) node[regular polygon, regular polygon sides=3, rotate=180, draw=black, fill=white, inner sep=1mm] {};
	\draw[name intersections={of=a1 and a3}] (intersection-1) node[regular polygon, regular polygon sides=3, rotate=180, draw=black, fill=white, inner sep=1mm] {};
	\draw[name intersections={of=a1 and a2}] (intersection-1) node[regular polygon, regular polygon sides=3, rotate=180, draw=black, fill=white, inner sep=1mm] {};
	\draw[name path=a1, draw=none] (3, 2) -- (5, 0);
	\draw[name path=a2, draw=none] (3, 0) -- (5, 2);
	\draw[name path=a3, draw=none] (3, 1) .. controls (3.5, 1) and (3.5, 2) .. (4, 2) .. controls (4.5, 2) and (4.5, 1) .. (5, 1);
	\draw (3, 1) .. controls (3.5, 1) and (3.5, 2) .. (4, 2) .. controls (4.5, 2) and (4.5, 1) .. (5, 1);
	\draw (3, 2) -- (5, 0);
	\draw (3, 0) -- (5, 2);
	\draw[name intersections={of=a2 and a3}] (intersection-1) node[regular polygon, regular polygon sides=3, rotate=180, draw=black, fill=white, inner sep=1mm] {};
	\draw[name intersections={of=a1 and a3}] (intersection-1) node[regular polygon, regular polygon sides=3, rotate=180, draw=black, fill=white, inner sep=1mm] {};
	\draw[name intersections={of=a1 and a2}] (intersection-1) node[regular polygon, regular polygon sides=3, rotate=180, draw=black, fill=white, inner sep=1mm] {};
	\draw[<->] (2.2, 1) -- (2.8, 1);
	\end{scope}
	\begin{scope}[cm={1,0,0,1,(0,-12.5)}]
	\draw[name path=a1, draw=none] (0, 2) -- (2, 0);
	\draw[name path=a2, draw=none] (0, 0) -- (2, 2);
	\draw[name path=a3, draw=none] (0, 1) .. controls (.5, 1) and (.5, 0) .. (1, 0) .. controls (1.5, 0) and (1.5, 1) .. (2, 1);
	\draw (0, 1) .. controls (.5, 1) and (.5, 0) .. (1, 0) .. controls (1.5, 0) and (1.5, 1) .. (2, 1);
	\draw (0, 2) -- (2, 0);
	\draw (0, 0) -- (2, 2);
	\draw[name intersections={of=a2 and a3}] (intersection-1) node[regular polygon, regular polygon sides=3, draw=black, fill=black, inner sep=1mm] {};
	\draw[name intersections={of=a1 and a3}] (intersection-1) node[regular polygon, regular polygon sides=3, draw=black, fill=black, inner sep=1mm] {};
	\draw[name intersections={of=a1 and a2}] (intersection-1) node[regular polygon, regular polygon sides=3, draw=black, fill=black, inner sep=1mm] {};
	\draw[name path=a1, draw=none] (3, 2) -- (5, 0);
	\draw[name path=a2, draw=none] (3, 0) -- (5, 2);
	\draw[name path=a3, draw=none] (3, 1) .. controls (3.5, 1) and (3.5, 2) .. (4, 2) .. controls (4.5, 2) and (4.5, 1) .. (5, 1);
	\draw (3, 1) .. controls (3.5, 1) and (3.5, 2) .. (4, 2) .. controls (4.5, 2) and (4.5, 1) .. (5, 1);
	\draw (3, 2) -- (5, 0);
	\draw (3, 0) -- (5, 2);
	\draw[name intersections={of=a2 and a3}] (intersection-1) node[regular polygon, regular polygon sides=3, draw=black, fill=black, inner sep=1mm] {};
	\draw[name intersections={of=a1 and a3}] (intersection-1) node[regular polygon, regular polygon sides=3, draw=black, fill=black, inner sep=1mm] {};
	\draw[name intersections={of=a1 and a2}] (intersection-1) node[regular polygon, regular polygon sides=3, draw=black, fill=black, inner sep=1mm] {};
	\draw[<->] (2.2, 1) -- (2.8, 1);
	\end{scope}
	\begin{scope}[cm={1,0,0,1,(0,-15)}]
	\draw[name path=a1, draw=none] (0, 2) -- (2, 0);
	\draw[name path=a2, draw=none] (0, 0) -- (2, 2);
	\draw[name path=a3, draw=none] (0, 1) .. controls (.5, 1) and (.5, 0) .. (1, 0) .. controls (1.5, 0) and (1.5, 1) .. (2, 1);
	\draw (0, 1) .. controls (.5, 1) and (.5, 0) .. (1, 0) .. controls (1.5, 0) and (1.5, 1) .. (2, 1);
	\draw (0, 2) -- (2, 0);
	\node[draw=white,fill=white,inner sep=1mm] at (1, 1) {};
	\draw (0, 0) -- (2, 2);
	\draw[name intersections={of=a2 and a3}] (intersection-1) node[regular polygon, regular polygon sides=3, draw=black, fill=black, inner sep=1mm] {};
	\draw[name intersections={of=a1 and a3}] (intersection-1) node[regular polygon, regular polygon sides=3, draw=black, fill=black, inner sep=1mm] {};
	\draw[name path=a1, draw=none] (3, 2) -- (5, 0);
	\draw[name path=a2, draw=none] (3, 0) -- (5, 2);
	\draw[name path=a3, draw=none] (3, 1) .. controls (3.5, 1) and (3.5, 2) .. (4, 2) .. controls (4.5, 2) and (4.5, 1) .. (5, 1);
	\draw (3, 1) .. controls (3.5, 1) and (3.5, 2) .. (4, 2) .. controls (4.5, 2) and (4.5, 1) .. (5, 1);
	\draw (3, 2) -- (5, 0);
	\node[draw=white,fill=white,inner sep=1mm] at (4, 1) {};
	\draw (3, 0) -- (5, 2);
	\draw[name intersections={of=a2 and a3}] (intersection-1) node[regular polygon, regular polygon sides=3, draw=black, fill=black, inner sep=1mm] {};
	\draw[name intersections={of=a1 and a3}] (intersection-1) node[regular polygon, regular polygon sides=3, draw=black, fill=black, inner sep=1mm] {};
	\draw[<->] (2.2, 1) -- (2.8, 1);
	\end{scope}
	\end{scope}

	\draw[dotted] (5.5, 2) -- (5.5, -15);
	\draw[dotted] (11.5, 2) -- (11.5, -15);
	\draw[dotted] (17.5, 2) -- (17.5, -15);
\end{tikzpicture}
\caption{Local moves}\label{F:reid}
\end{center}
\end{figure}

\subsection{Allowed moves}

Figure \ref{F:reid} shows the possible local changes. Which changes are allowed depends on the crossing type of the labeled crossings. Our general rule is to take all moves that involve crossing types possible in the diagrams, but we will also give an explicit list for each diagram type. We will refer to moves by the letter and number shown in Fig. \ref{F:reid}.

For classical links, local changes to crossings are given by the Reidemeister moves, shown as moves $a1$, $a2$, and $a3$. Details may be found in standard textbooks, such as Kauffman \cite{kauffman06} or Murasugi \cite{murasugi}. 

For virtual links, which contain classical and virtual crossings, we reference Kauffman \cite{vkt99}. Moves permitted for virtual links are $a1$, $e1$, $a2$, $e2$, $a3$, $a4$, and $b4$. For virtual links that contain rigid crossings, we add moves $b3$, $c3$, $d3$, and $g1$. For flat virtual links (also described in \cite{vkt99}), which contain flat and virtual crossings, we permit moves $b1$, $e1$, $b2$, $e2$, $e3$, $f3$, and $b4$.

For welded links, which contain classical and welded (down) crossings, we reference Fenn, Rimanyi, and Rourke \cite{welded97}. Moves permitted for welded links are $a1$, $c1$, $a2$, $c2$, $a3$, $g3$, $c4$, and $e4$. Welded knots differ from virtual knots by allowing an extra move -- $c4$ -- in which ordinary crossings are passed over a welded crossing. We create a distinction between welded (down) knots and the variety given by allowing ordinary crossings to pass under a welded (up) crossing. This alternate theory allows moves $a1$, $d1$, $a2$, $d2$, $a3$, $g4$, $d4$, and $f4$.

\section{An alternate construction}

We now give an alternate construction. We will show that this construction is equivalent, up to allowed moves, to the construction previously exhibited. We first give a formal specification of a language. This specification is given in EBNF, extended Backus-Naur form, defined in the ISO standard 14977:1996 \cite{isoebnf}. In EBNF, characters have the following meanings: the vertical bar \texttt{|} functions as an 'or' indicator, separating the elements of a set of which any one may be chosen; the characters \texttt{\{\}} enclose items repeated $0$ or more times; literals are enclosed in quotes; commas indicate concatenation; the \texttt{*} character indicates repetition; the \texttt{-} character indicates the set minus operation; and parentheses act as grouping.

\begin{definition}[tar sentence, virtual word, welded word, basic word, flat word set, rigid word set, circle word, label] \emph{Tar sentences} and the associated pieces \emph{virtual word}, \emph{welded word}, \emph{basic word}, \emph{flat word set}, \emph{rigid word set}, \emph{circle word}, and \emph{label} are in the space of words defined by the syntax in Fig. \ref{F:syntax}. Those sentences that match this syntax and can be drawn as described in the next subsection are tar sentences.
\end{definition}
\begin{figure}[ht]
\begin{verbatim}
tar sentence = {(virtual word | welded word)}, {(basic word |
  flat word set  | rigid word set)}, {welded word}, circle word;
virtual word = "[", basic word, "]";
welded word = "(", basic word, ")";
basic word = ("+" | "-"), 2 * label, {label};
flat word set = "{", basic word, {basic word}, "}";
rigid word set = "!", 2 * basic word;
circle word = ";", 2 * label, {label};
label = digit - "0", {digit}, ".";
digit = "0" | "1" | "2" | "3" | "4" | "5" | "6" | "7" | "8" | "9";
\end{verbatim}
\caption{Formal syntax for tar sentences}\label{F:syntax}
\end{figure}

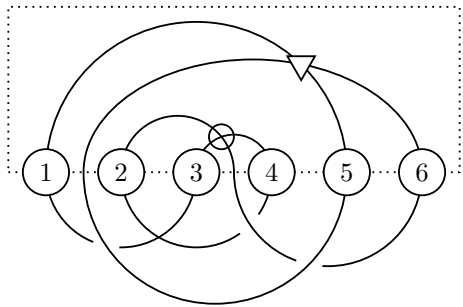
\begin{figure}[ht]
\begin{center}
\begin{tikzpicture}[line width=.7pt]
	\draw[draw=none, name path=a1] (2, 0) arc[start angle=180, end angle=0, radius=.5];
	\draw[draw=none, name path=a2] (0, 0) arc[start angle=180, end angle=0, radius=2];
	\draw[draw=none, name path=a3] (5, 0) arc[start angle=0, end angle=180, y radius=1.5, x radius=2.25] arc[start angle=-180, end angle=0, radius=1.75];
	\draw[draw=none, name path=a4] (1, 0) arc[start angle=180, end angle=0, radius=.75] arc[start angle=-180, end angle=0, radius=1.25];
	\draw[draw=none, name path=a5] (0, 0) arc[start angle=-180, end angle=0, radius=1];
	\draw[draw=none, name path=a6] (1, 0) arc[start angle=-180, end angle=0, radius=1];

	\draw (1, 0) arc[start angle=-180, end angle=0, radius=1];
	\draw[name intersections={of=a4 and a6}] (intersection-2) node[circle,draw=white,fill=white] {};
	\draw (0, 0) arc[start angle=-180, end angle=0, radius=1];
	\draw[name intersections={of=a3 and a5}] (intersection-1) node[circle,draw=white,fill=white] {};
	\draw (1, 0) arc[start angle=180, end angle=0, radius=.75] arc[start angle=-180, end angle=0, radius=1.25];
	\draw[name intersections={of=a3 and a4}] (intersection-2) node[circle,draw=white,fill=white] {};
	\draw[name intersections={of=a4 and a1}] (intersection-1) node[circle,draw=black,fill=none] {};
	\draw (5, 0) arc[start angle=0, end angle=180, y radius=1.5, x radius=2.25] arc[start angle=-180, end angle=0, radius=1.75];
	\draw (0, 0) arc[start angle=180, end angle=0, radius=2];
	\draw (2, 0) arc[start angle=180, end angle=0, radius=.5];
	\draw[name intersections={of=a2 and a3}] (intersection-1) node[scale=.75, regular polygon, rotate=180, regular polygon sides=3, draw=black, fill=white, inner sep=1mm] {};
	
	\draw[dotted] (-.5, 0) -- (5.5, 0) -- (5.5, 2.2) -- (-.5, 2.2) -- cycle;
	\begin{scope}[every node/.style={circle,draw=black,fill=white}]
	\node at (1, 0) {$2$};
	\node at (5, 0) {$6$};
	\end{scope}
	\begin{scope}[every node/.style={circle,draw=black,fill=white}]
	\node at (0, 0) {$1$};
	\node at (4, 0) {$5$};
	\node at (2, 0) {$3$};
	\node at (3, 0) {$4$};
	\end{scope}
	
	\node at (2.5, -2.5) {\texttt{[+3.4.](+5.1.)+6.1.5.+2.3.6.\{-1.3.-4.2.\};1.2.3.4.5.6.}};
\end{tikzpicture}
\caption{A tar sentence with various crossing types and an associated diagram; the link orientation is from \texttt{1} toward \texttt{3}.}\label{F:exkishino}
\end{center}
\end{figure}

Let us give an example. Figure \ref{F:exkishino} may be represented by the tar sentence \texttt{[+3.4.](+5.1.)+6.1.5.+2.3.6.\{-1.3.-4.2.\};1.2.3.4.5.6.} as demonstrated in future sections; here, we are interested in the component pieces of the syntax. The basic letters of the language are integers -- which we separate by the use of the \texttt{.} character -- and the punctuation marks \texttt{\{ \} ( ) + - ;} and \texttt{!}. In the tar sentence we have chosen, our letters are the integers $1$ through $6$ and a subset of the puncutation. The basic words are \texttt{+3.4.}, \texttt{+5.1.}, \texttt{+6.1.5.}, \texttt{+2.3.6.}, \texttt{-1.3.}, and \texttt{-4.2.}. Additionally, we have the circle word \texttt{;1.2.3.4.5.6.}.

We note that the number of basic words is equal to the number of integers used. This will be true of all tar sentences because each basic word (in conjunction with the circle word of the sentence) describes, up to an end-point fixing homotopy, an embedding of a path into a $2$-sphere with holes. As well, the set of integers that are at the start of a basic word will be the complete set of integers used; the same is true for integers that are at the end of a basic word. No basic word will begin and end with the same integer. The first two basic words are enclosed like this: \texttt{[+3.4.]} and \texttt{(+5.1.)}. This indicates that the two basic words are of virtual and welded \emph{types}, respectively. 

The rules for turning word types into crossing types are given in the next subsection, but determining the type of the word can be done directly from the syntax: virtual words are enclosed in \texttt{[]}; welded words are enclosed in \texttt{()} -- whether it is welded up or down depends on the placement of the welded word in the sentence; all of the basic words in the same flat hierarchy are enclosed in \texttt{\{\}}; rigid words (which always come in pairs) are preceded by a \texttt{!}; and every basic word with no other type is classical. The shape of the path described by a word depends only on the basic word. Looking at figure \ref{F:exkishino} again, we see that the last two basic words are enclosed in braces; the crossing between the pieces of the diagram represented by these two words -- the arc from \texttt{1} to \texttt{3} and the arc from \texttt{4} to \texttt{2} is flat. The only type of crossing that has a notion of upper and lower is the classical crossing. For these, which is upper and which is lower is determined by the order of the words: upper words precede lower words.

The circle word is a permutation of the integers used to form the basic words; it may be identified in the tar sentence because it begins with a \texttt{;} character and is always the final word of the sentence. We see that in the diagram the labels are listed in the circle word in the same order that they are encountered when traveling around the dotted line. We will see how this gives shape to a diagram and how it may be chosen from a diagram in future subsections. In essence, the circle word gives the location of the holes in the $2$-spheres used by the basic words.

Of special note is that a link diagram may be represented by a multitude of distinct tar sentences. As a result, many of the changes listed as moves for the tar sentences may not alter the diagram.

\subsection{How to draw a diagram from a tar sentence}

A link diagram in $S^2$ may be constructed from a tar sentence $L$ by the following steps. The crossing types of a diagram are determined by the rules given after the construction process.

\begin{enumerate}
\item Let $V_n = v_1, v_2, \dots, v_n$ be the sequence of labels in the circle word of $L$.
\item Let $c(v_i)$ be the number of occurrences of the label $v_i$ in the basic words of $L$.
\item If the elements of $V$ are not all distinct or the number of distinct labels in the basic words of $L$ is not $n$, then $L$ is not valid.
\item Let $m_1 = m_{n+1} = 0, m_j = \sum_{i=1}^{j-1} c(v_i) - 1$.
\item Let $k = m_n + c(v_n) - 1$.
\item Draw the labels $v_1, v_2, \dots, v_n$ at angles $\frac{2\pi m_1}{k}, \frac{2\pi m_2}{k}, \dots, \frac{2\pi m_n}{k}$, respectively, on the unit circle of $\mathbb{R}^2$.
\item For each basic word $w$ of $L$, we construct a curve on $S^2 \in S^3$ in the following manner.
\begin{enumerate}
\item If $w$ begins with \texttt{+}, the first arc goes through the upper hemisphere of $S^2 \in S^3$. Otherwise, the first arc goes through the lower hemisphere of $S^2 \in S^3$.
\item Arcs alternate between hemispheres and project (by $(x, y, z) \mapsto (x, y)$) to $\mathbb{R}^2$ as chords of the unit circle.
\item The first arc begins at the label matching the first label of $w$; the last arc ends at the label matching the last label of $w$.
\item Each pair of consecutive labels $v_iv_j$ in $w$ projects to a chord with ends in the arcs $\left[\frac{2\pi m_i}{k}, \frac{2\pi m_{i+1}}{k}\right)$ and $\left[\frac{2\pi m_j}{k}, \frac{2\pi m_{j+1}}{k}\right)$ of the unit circle.
\end{enumerate}
\item The ends of the chords of the previous step are chosen to be evenly spaced around the unit circle and to minimize the number of crossings.
\item If any basic word cannot be drawn to avoid self-crossings of the curve, $L$ is not valid.
\end{enumerate}

Figure \ref{F:basic_examples} shows some basic words with projections of their diagrams. It is interesting to notice, assuming a circle word containing \texttt{1.2.3.4.}, that \texttt{+1.2.4.} and \texttt{+1.2.4.2.4.} are valid basic words but \texttt{+1.2.4.2.} is not.

\begin{figure}[ht]
\begin{center}
\begin{tikzpicture}[line width=.7pt]

	\begin{scope}[cm={1,0,0,1,(0,0)}]
	\draw[dotted] (-.5, 0) -- (3.5, 0);
	\node[draw=black, circle, fill=white] (v1) at (0, 0) {$1$};
	\node[draw=black, circle, fill=white] (v2) at (1, 0) {$2$};
	\node[draw=black, circle, fill=white] (v3) at (2, 0) {$3$};
	\node[draw=black, circle, fill=white] (v4) at (3, 0) {$4$};
	\draw[->] (v1) .. controls (0, 1) and (2, 1) .. (v3);
	\node at (1.5, -.7) {\texttt{+1.3.}};
	\end{scope}

	\begin{scope}[cm={1,0,0,1,(6,0)}]
	\draw[dotted] (-.5, 0) -- (3.5, 0);
	\node[draw=black, circle, fill=white] (v1) at (0, 0) {$1$};
	\node[draw=black, circle, fill=white] (v2) at (1, 0) {$2$};
	\node[draw=black, circle, fill=white] (v3) at (2, 0) {$3$};
	\node[draw=black, circle, fill=white] (v4) at (3, 0) {$4$};
	\draw[<-] (v1) .. controls (0, 1) and (2, 1) .. (v3);
	\node at (1.5, -.7) {\texttt{+3.1.}};
	\end{scope}

	\begin{scope}[cm={1,0,0,1,(0,-2)}]
	\draw[dotted] (-.5, 0) -- (3.5, 0);
	\node[draw=black, circle, fill=white] (v1) at (0, 0) {$1$};
	\node[draw=black, circle, fill=white] (v2) at (1, 0) {$2$};
	\node[draw=black, circle, fill=white] (v3) at (2, 0) {$3$};
	\node[draw=black, circle, fill=white] (v4) at (3, 0) {$4$};
	\draw[->] (v1) .. controls (0, -1) and (2, -1) .. (v3);
	\node at (1.5, .7) {\texttt{-1.3.}};
	\end{scope}

	\begin{scope}[cm={1,0,0,1,(6,-2)}]
	\draw[dotted] (-.5, 0) -- (3.5, 0);
	\node[draw=black, circle, fill=white] (v1) at (0, 0) {$1$};
	\node[draw=black, circle, fill=white] (v2) at (1, 0) {$2$};
	\node[draw=black, circle, fill=white] (v3) at (2, 0) {$3$};
	\node[draw=black, circle, fill=white] (v4) at (3, 0) {$4$};
	\draw[->] (v1) .. controls (0, 1) and (1.5, 1) .. (1.5, 0) .. controls (1.5, -1) and (3.5, -1) .. (3.5, 0) .. controls (3.5, 1) and (2, 1) .. (v3);
	\node at (1, -.7) {\texttt{+1.2.4.3.}};
	\end{scope}

	\begin{scope}[cm={1,0,0,1,(0,-4)}]
	\draw[dotted] (-.5, 0) -- (3.5, 0);
	\node[draw=black, circle, fill=white] (v1) at (0, 0) {$1$};
	\node[draw=black, circle, fill=white] (v2) at (1, 0) {$2$};
	\node[draw=black, circle, fill=white] (v3) at (2, 0) {$3$};
	\node[draw=black, circle, fill=white] (v4) at (3, 0) {$4$};
	\draw[->] (v1) .. controls (0, .5) and (.5, .5) .. (.5, 0) .. controls (.5, -1) and (2, -1) .. (v3);
	\node at (1.5, .7) {\texttt{+1.1.3.}};
	\end{scope}

	\begin{scope}[cm={1,0,0,1,(6,-4)}]
	\draw[dotted] (-.5, 0) -- (3.5, 0);
	\node[draw=black, circle, fill=white] (v1) at (0, 0) {$1$};
	\node[draw=black, circle, fill=white] (v2) at (1, 0) {$2$};
	\node[draw=black, circle, fill=white] (v3) at (2, 0) {$3$};
	\node[draw=black, circle, fill=white] (v4) at (3, 0) {$4$};
	\draw[->] (v1) .. controls (0, 1) and (1.35, 1) .. (1.35, 0) .. controls (1.3, -1) and (3.5, -1) .. (3.5, 0) .. 
		controls (3.5, 1) and (1.55, 1) .. (1.55, 0) .. controls (1.65, -.75) and (3, -.75) .. (v4);
	\node at (.8, -.7) {\texttt{+1.2.4.2.4.}};
	\end{scope}

\end{tikzpicture}
\caption{Some basic words and associated diagrams. All have a circle word containing \texttt{1.2.3.4.}.}\label{F:basic_examples}
\end{center}
\end{figure}
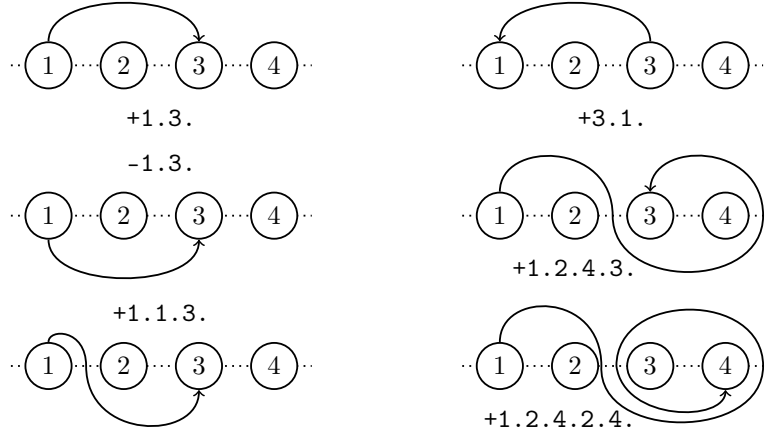

The following rules determine the type of every crossing between two curves drawn from words $w_i$ and $w_j$ in $L = \cdots w_i \cdots w_j \cdots$.

\begin{enumerate}
\item If $w_i$ or $w_j$ is a virtual word, then the crossings are virtual; otherwise,
\item if $w_i$ or $w_j$ is a welded word and the other is not or there is no word $w_k$ between $w_i$ and $w_j$ with $w_k$ non-welded and non-virtual, then the crossings are welded -- up for a welded word at the beginning of the sentence, down for a welded word in the group adjacent to the circle word; otherwise,
\item if $w_i$ and $w_j$ are welded, then the link is non-standard and the crossings are also non-standard; otherwise,
\item if $w_i$ and $w_j$ are in the same rigid word set, then the crossing is rigid; otherwise,
\item if $w_i$ and $w_j$ are in the same flat word set, then the crossings are flat; otherwise,
\item the crossing is classical, and $w_i$ crosses over $w_j$.
\end{enumerate}

We will give an interpretation of a crossing between welded (up) and welded (down) trails in the section on moves of a tar sentence.

\subsection{How to construct a tar sentence from a link diagram}

Let $L$ be a link diagram. To $L$ we add some nodes by subdividing arcs of $L$. We assign each new node, all of which are of degree $2$, the type \emph{riser mark}. Let $L'$ be a link diagram created in this way by subdividing each arc of $L$ twice.

We now consider paths of length $2$ where the center node is of degree $4$ and the path forms a transverse crossing with the path comprising the other $2$ paths of the center node. To each of these paths we assign the same type as the type of the center node. This leaves paths of length $1$ whose ends are both nodes of degree $2$. To each of these paths we assign the classical type.

\begin{definition}[fully partitioned link diagram] A projection to $\mathbb{R}^2$ of a link diagram in which all arcs have been twice subdivided and types have been assigned such that every arc is assigned a type and no arc has a type different than its head or tail node, whichever is a crossing, is a \emph{fully partitioned link diagram}.
\end{definition}

\begin{definition}[riser mark, trail]A \emph{riser mark} is a degree two vertex. A \emph{trail} is a path that begins and ends at a riser mark and crosses each degree $4$-vertex transversely.
\end{definition}

\begin{definition}[binding circle, inside, outside]A \emph{binding circle} of a fully partitioned link diagram is an oriented, self-avoiding loop which contains each riser mark of the diagram and crosses all other points of the diagram transversely.  The region bordering the binding circle is \emph{outside} the binding circle, and the finite region bounded by the binding circle is \emph{inside} the binding circle. The binding circle is oriented such that the winding number of any point inside the binding circle is one.
\end{definition}

We label the riser marks of $L$ with distinct, positive integers and construct the basic words of $L$ in the following way.

\begin{enumerate}
\item{If there is a neighborhood of the tail of the trail distinct from the outside of the binding circle, then the first letter is \texttt{+}.}
\item{If there is a neighborhood of the tail of the trail distinct from the inside of the binding circle, then the first letter is \texttt{-}.}
\item{The second and third letters are the label of the riser mark at the tail of the trail and the \texttt{.} character respectively.}
\item{At each crossing of the trail and the binding circle, taking crossings sequentially along the trail, the label of the riser mark first encountered when following the binding circle counter to its orientation from the crossing is appended, followed by the \texttt{.} character.}
\item{The last two letters of the basic word are the label of the riser mark at the head of the trail and the \texttt{.} character.} 
\end{enumerate}

From the basic word $w$ of each trail, we construct the tar sentence in the following order:
\begin{enumerate}
\item The virtual trails are written in any order by writing \texttt{[} $w$ \texttt{]} for each.
\item The welded (up) trails are written in any order by writing \texttt{(} $w$ \texttt{)} for each.
\item The classical, rigid, and flat trails are written in an order determined by their classical crossings. If $a$ crosses over $b$, $a$ must appear before $b$.
\item Flat trails $w_1$ and $w_2$ that share a crossing are written \texttt{\{} $w_1$ $w_2$ \texttt{\}}.
\item Rigid trails $w_1$ and $w_2$ that share a crossing are written \texttt{!} $w_1$ $w_2$.
\item The welded (down) trails are written in any order by writing \texttt{(} $w$ \texttt{)} for each.
\item The binding circle is written as \texttt{;} $l_1$ \texttt{.} $l_2$ \texttt{.} $\cdots$ where $l_1$ is the label of an arbitrarily chosen riser mark and each $l_i$ is the label of the next riser mark encountered when following the binding circle in its orientation.
\end{enumerate}

\begin{proposition}The tar sentence of a link diagram created as described allows reconstruction of the diagram up to isotopy.
\end{proposition}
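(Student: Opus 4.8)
The plan is to show that the decoding procedure of the previous subsection, applied to the tar sentence built from a fully partitioned diagram $L$ with its chosen binding circle $C$, returns a diagram ambient isotopic in $S^2$ to $L$. Since riser marks are degree-two vertices, they do not alter the underlying link diagram, so it suffices to recover $L$ together with $C$ up to isotopy. The first structural fact I would use is that $C$, being a self-avoiding loop, separates $S^2$ into two closed disks, the inside $D_{\mathrm{in}}$ and the outside $D_{\mathrm{out}}$, with every riser mark lying on $C=\partial D_{\mathrm{in}}=\partial D_{\mathrm{out}}$. I would then check that the circle word recovers the cyclic order of the riser marks along $C$: by construction it lists them in the order met while traversing $C$ with its orientation, and the drawing procedure places the labels on the unit circle in exactly this cyclic order. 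Hence, after an isotopy carrying $C$ to the unit circle, the boundary data of the two disks agree with those produced by the decoder.

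Next I would recover each trail individually. A trail is an arc whose endpoints are riser marks on $C$ and whose interior meets $C$ transversely, so between consecutive intersections it lies in a single disk and the sides alternate. The leading \texttt{+} or \texttt{-} records, via the neighborhood condition in the construction rules, whether the initial sub-arc lies in $D_{\mathrm{in}}$ or $D_{\mathrm{out}}$, and each label records, through the ``first riser mark met travelling against the orientation of $C$'' rule, the gap of $C$ in which the corresponding crossing occurs. This is precisely the content of the observation recorded after the definition of tar sentences: the basic word together with the circle word determines the trail up to endpoint-fixing homotopy in the sphere with holes at the riser marks. Because validity forbids self-crossings, each trail is embedded, and each of its maximal sub-arcs is a properly embedded arc in a disk; invoking the standard fact that such an arc is determined up to isotopy rel $\partial$ by its two boundary endpoints, I would upgrade the homotopy statement to an isotopy statement for each trail, rel $C$.

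The crossing data is then read off combinatorially and matched to $L$. The type of each crossing—virtual, welded (up or down), rigid, flat, or classical, together with the over/under choice in the classical case—is determined by the bracketing and the left-to-right order of the words through the six crossing-type rules and the rule that upper words precede lower words. These rules are the literal inverses of the corresponding steps in the construction of the tar sentence, so the assigned types coincide with those of $L$ crossing by crossing. Assembling the recovered boundary structure, the recovered trails, and these types yields the decoded diagram.

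The hard part will be passing from ``each trail recovered up to isotopy rel $C$'' to ``the whole union recovered up to isotopy,'' that is, showing that the mutual crossings between distinct trails inside each disk are the ones present in $L$. Two embedded sub-arcs in a disk with the same endpoints cross with a parity fixed by whether their endpoints interleave on $C$, but not with a fixed number of crossings; a superfluous bigon between two sub-arcs would be invisible to the tar sentence, which records only each trail's sequence of gaps. I would handle this by arguing that the decoder's ``evenly spaced, minimal-crossing'' chord realization is the unique embedded realization of the recorded chord diagram that has no reducible bigon, and that any two embedded collections of arcs in a disk with the same boundary matching and no reducible bigon are isotopic rel $\partial$; the role of the twice-subdivision is to supply enough riser marks that the relevant crossing points may be slid along $C$ to attain this taut position. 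A subsidiary point to dispatch is that the order of several crossing points sharing one gap is immaterial up to isotopy, since such points belong to sub-arcs on alternating sides of $C$ and may be interchanged by an isotopy that does not pass a riser mark; this justifies the decoder's even spacing as a legitimate representative. Establishing this taut-position uniqueness, disk by disk and along $C$, is where the substance of the argument lies and is the step I expect to require the most care.
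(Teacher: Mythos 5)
Your overall strategy --- cut $S^2$ along the binding circle, recover the boundary data from the circle word and each trail from its basic word, then reglue --- is the same as the paper's, and you have correctly located the real danger: the tar sentence records inter-trail crossings only implicitly, so any crossing configuration inside a disk that is not forced by the gap data would be invisible to the decoder. But the proposal stalls exactly there. The ``taut-position uniqueness'' program you sketch is left unproven, and, more importantly, you never establish the hypothesis it would need: that the configuration of sub-arcs cut out of the original diagram contains no superfluous bigons, i.e.\ is already in minimal position with respect to its gap data. You gesture at the double subdivision as ``supplying enough riser marks,'' but give no argument. Your one concrete supporting claim --- that crossing points sharing a gap lie on sub-arcs on alternating sides of $C$ and may be interchanged harmlessly --- is false as stated: two distinct trails can enter the same disk through the same gap, and their relative order within that gap is exactly the kind of data that decides whether their sub-arcs must cross, so asserting that it is immaterial begs the question.

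What closes the gap, and what the paper's two-sentence proof actually rests on (``the diagram is partitioned into disjoint crossings and arcs by the removal of the binding circle''), is a direct consequence of the double subdivision that your write-up never isolates: in the fully partitioned diagram, every edge incident to a degree-$4$ vertex has its other endpoint at a riser mark, and every riser mark lies on the binding circle. Hence any path in the diagram joining two crossings meets the binding circle, so each connected component of the diagram minus the binding circle contains at most one crossing. The cut pieces are therefore pairwise disjoint embedded arcs and single crossings with four stubs: each sub-arc participates in at most one crossing, two sub-arcs meet at most once, and bigons cannot occur at all. With that observation there is nothing left for a minimal-position theorem to do: every piece is individually trivial, the pieces are mutually disjoint, and the tar sentence (word types, gap sequences, and the circle word) prescribes how the pieces are stitched back together along the binding circle. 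Replacing your final paragraph with this component-counting argument converts your plan into a proof, and it is considerably more elementary than the uniqueness-of-taut-position machinery you propose to develop.
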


\begin{proof}A link diagram with the riser marks and binding circle described is partitioned into disjoint crossings and arcs by the removal of the binding circle. The tar sentence describes a map for stitching together the pieces of the diagram.
\end{proof}

\section{Moves on the alternate construction}

The construction of a tar sentence given produces a representation of a link that is, in most cases, longer than a representation that relies on enumerating the same set of crossings. We proceed now to show two reasons why it is of interest. First, the moves on tar sentences are more global in nature than the local crossing changes of the standard representations. Second, the tar sentence may be compacted to a form whose length depends only on the number of riser marks. Because the minimal number of riser marks is twice the bridge index for classical links and comparable for many non-classical links (specifically excluding rigid vertices), links with very high crossing numbers can often be expressed in a brief way. In this section, we describe the moves on tar sentences.

Given a tar sentence, we can make moves of the following types.
\begin{enumerate}
\item riser removal (def. \ref{D:riserrem})
\item trail swap (def. \ref{D:trailswap})
\item duplicate removal (def. \ref{D:duplicateremoval})
\item riser addition (def. \ref{D:riseradd})
\item trail type conversion (def. \ref{D:trailtype})
\item riser move (def. \ref{D:risermove})
\item trail move (def. \ref{D:trailmove})
\item rigid flip (def. \ref{D:rigidflip})
\end{enumerate}

\begin{definition}[riser removal]\label{D:riserrem}A \emph{riser removal} of a sentence is the concatenation of two basic words. In order to be concatenated, the two basic words must meet the following requirements:
\begin{enumerate}
\item neither of the words may be in a rigid word set
\item when the punctuation, excepting \texttt{+ -}, is removed, the basic words must be consecutive
\item the final label of the first word must be the first label of the second
\item the trails represented by the basic words must not cross
\item if either of the words is in a flat word set, the other word must not intersect with any trails represented by members of that set
\item if the words are of different types, then each must be either basic or flat
\item there are at least three basic words in the link component of the diagram of which the words are a partial representation
\end{enumerate}
For basic words $a$ and $b$ which meet these criteria, the concatenation is the result of the following process.
\begin{enumerate}
\item If the number of labels in the first word is even, then:
	\begin{enumerate}
	\item if the first letters of $a$ and $b$ are the same, then remove the final label of $a$ and the first letter and first label of $b$
	\item if the first letters of $a$ and $b$ differ, remove the final label of $a$ and the first letter of $b$
	\end{enumerate}
\item If the number of labels in the first word is odd, then:
	\begin{enumerate}
	\item if the first letters of $a$ and $b$ are the same, then remove the final label of $a$ and the first letter of $b$
	\item if the first letters of $a$ and $b$ differ, remove the final label of $a$ and the first letter and first label of $b$
	\end{enumerate}
\item In all basic words replace all occurrences of the last label $l$ of $a$ with the label immediately before $l$ in the circle word of the sentence; if $l$ is the first label of the circle word, replace $l$ with the last label of the circle word.
\item Replace the punctuation indicating word type (e.g. virtual word, flat word set, etc.) according to the following rules:
	\begin{enumerate}
	\item words of the same type keep that type
	\item a basic word and a flat word form a word that belongs in the same flat word set as the original flat word
	\item if both words are flat and from different flat word sets, the two sets are combined
	\end{enumerate}
\end{enumerate}

Figure \ref{F:riser_rem} shows a partial diagram with riser removal.

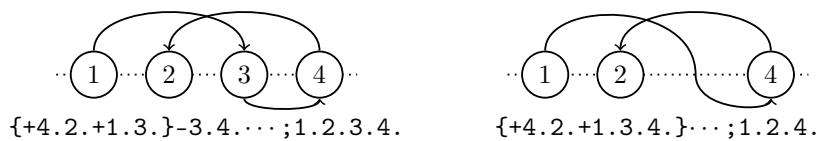
\begin{figure}[ht]
\begin{center}
\begin{tikzpicture}[line width=.7pt]

	\begin{scope}[cm={1,0,0,1,(0,0)}]
	\draw[dotted] (-.5, 0) -- (3.5, 0);
	\node[draw=black, circle, fill=white] (v1) at (0, 0) {$1$};
	\node[draw=black, circle, fill=white] (v2) at (1, 0) {$2$};
	\node[draw=black, circle, fill=white] (v3) at (2, 0) {$3$};
	\node[draw=black, circle, fill=white] (v4) at (3, 0) {$4$};
	\draw[->] (v1) .. controls (0, 1) and (2, 1) .. (v3);
	\draw[->] (v3) .. controls (2, -.5) and (3, -.5) .. (v4);
	\draw[<-] (v2) .. controls (1, 1) and (3, 1) .. (v4);
	\node at (1.5, -.7) {\texttt{\{+4.2.+1.3.\}-3.4.}$\cdots$\texttt{;1.2.3.4.}};
	\end{scope}

	\begin{scope}[cm={1,0,0,1,(6,0)}]
	\draw[dotted] (-.5, 0) -- (3.5, 0);
	\node[draw=black, circle, fill=white] (v1) at (0, 0) {$1$};
	\node[draw=black, circle, fill=white] (v2) at (1, 0) {$2$};
	\node[draw=black, circle, fill=white] (v4) at (3, 0) {$4$};
	\draw[->] (v1) .. controls (0, 1) and (2, 1) .. (2, 0) .. controls (2, -.5) and (3, -.5) .. (v4);
	\draw[<-] (v2) .. controls (1, 1) and (3, 1) .. (v4);
	\node at (1.5, -.7) {\texttt{\{+4.2.+1.3.4.\}}$\cdots$\texttt{;1.2.4.}};
	\end{scope}
\end{tikzpicture}
\caption{A riser removal}\label{F:riser_rem}
\end{center}
\end{figure}
\end{definition}

\begin{definition}[trail swap]\label{D:trailswap}A \emph{trail swap} is the exchange of the position of two consecutive trail elements of a sentence. Trail elements are virtual words, welded words, rigid word sets, flat word sets, and basic words not in a virtual word or welded word. This exchange is permitted in the following cases. In all other cases, exchange is forbidden:
\begin{enumerate}
\item{both are members of the same flat word set}
\item{both are basic words and the trails represented by the two do not cross}
\item{both are virtual}
\item{both are welded}
\item{one is virtual and the other is welded}
\item{each is either a basic word, flat word set, or rigid word set and there are no crossings between trails represented by the first and the second.}
\end{enumerate}

A trail swap has induces no diagrammatic change. As an example, the left-hand side of fig. \ref{F:riser_rem} may be re-written as \texttt{\{+1.3.+4.2.\}-3.4.}$\cdots$\texttt{;1.2.3.4.} without altering the diagram.
\end{definition}

\begin{definition}[duplicate removal]\label{D:duplicateremoval}A \emph{duplicate removal} is the replacement of a pair of adjacent, identical labels according to the following rules:
\begin{enumerate}
\item if the duplication occurs in a rigid word set, then no change occurs; otherwise,
\item if the letter before the duplication is a \texttt{+}, then one of the duplicates is removed and the \texttt{+} is replaced by a \texttt{-}; otherwise,
\item if the letter before the duplication is a \texttt{-}, then one of the duplicates is removed and the \texttt{+} is replaced by a \texttt{+}; otherwise,
\item if the repeated labels are the final two labels in a word, then one of them is removed; otherwise,
\item if none of the previous cases apply, then both of the duplicates are removed
\end{enumerate}

Figure \ref{F:dup_removal} shows a duplicate removal. The change from \texttt{+1.1.3.} to \texttt{-1.3.} in fig. \ref{F:basic_examples} is also an example of a duplicate removal. 

\begin{figure}[ht]
\begin{center}
\begin{tikzpicture}[line width=.7pt]

	\begin{scope}[cm={1,0,0,1,(0,0)}]
	\draw[dotted] (-.5, 0) -- (3.5, 0);
	\node[draw=black, circle, fill=white] (v1) at (0, 0) {$1$};
	\node[draw=black, circle, fill=white] (v2) at (1, 0) {$2$};
	\node[draw=black, circle, fill=white] (v3) at (2, 0) {$3$};
	\node[draw=black, circle, fill=white] (v4) at (3, 0) {$4$};
	\draw[->] (v1) .. controls (0, 1) and (2.2, 1) .. (2.45, 0) .. controls (2.45, -.5) and (2.65, -.5) .. (2.65, 0) .. controls (2.65, .75) and (3, .75) .. (v4);
	\node at (1.5, -.7) {\texttt{+1.3.3.4.}$\cdots$\texttt{;1.2.3.4.}};
	\end{scope}

	\begin{scope}[cm={1,0,0,1,(6,0)}]
	\draw[dotted] (-.5, 0) -- (3.5, 0);
	\node[draw=black, circle, fill=white] (v1) at (0, 0) {$1$};
	\node[draw=black, circle, fill=white] (v2) at (1, 0) {$2$};
	\node[draw=black, circle, fill=white] (v3) at (2, 0) {$3$};
	\node[draw=black, circle, fill=white] (v4) at (3, 0) {$4$};
	\draw[->] (v1) .. controls (0, 1) and (3, 1) .. (v4);
	\node at (1.5, -.7) {\texttt{+1.4.}$\cdots$\texttt{;1.2.3.4.}};
	\end{scope}
\end{tikzpicture}
\caption{A duplicate removal}\label{F:dup_removal}
\end{center}
\end{figure}
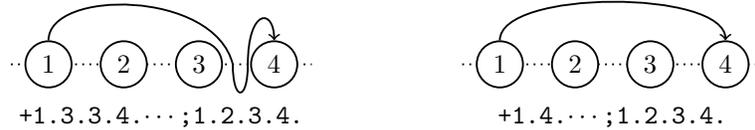

\end{definition}

\begin{definition}[riser addition]\label{D:riseradd}A \emph{riser addition} is the replacement of a basic word, not part of a rigid word set, by a pair of basic words. It occurs by the following procedure.
\begin{enumerate}
\item{We choose a basic word.}
\item{We let the final label of the word be $l$.}
\item{A new label, distinct from any label in the sentence, is chosen. We call this new label $t$.}
\item{In the circle word, we replace $l$ by the sequence $tl$.}
\item{We replace the final label of the basic word by $t$. We insert the word \texttt{+} $lt$ just after the chosen word with the same word type as the chosen word. If the chosen word is flat, the new word is placed in the same flat word set.}
\end{enumerate}

Figure \ref{F:riser_rem} shows a riser addition if the diagram is read from right to left.
\end{definition}

\begin{definition}[trail type conversion]\label{D:trailtype}A \emph{trail type conversion} is the replacement of one trail element by a different trail element containing the same basic words but of a different type. This may be done according to the following rules.
\begin{enumerate}
\item A virtual (welded) word for which the trail represented by the word has no crossings with any non-virtual (non-welded) trails and that follows all other virtual words and welded words representing welded (up) trails may be replaced by a basic word.
\item A flat word set containing only one basic word may be replaced by that basic word.
\item A basic word representing a flat trail that has no crossings with any of the trails represented by other members of its flat word set may be moved to immediately precede the flat word set. If this leaves an empty set, the previous rule is used instead.
\item A basic word with no classical crossings and which is preceded only by virtual and welded words may be replaced by a virtual or welded word.
\item A basic word with no classical crossings and which is followed only by welded words may be replaced by a welded word.
\end{enumerate}

The left-hand side of fig. \ref{F:riser_rem} may be written as \texttt{\{+4.2.+1.3.-3.4.\}}$\cdots$\texttt{;1.2.3.4.} instead of \texttt{\{+4.2.+1.3.\}-3.4.}$\cdots$\texttt{;1.2.3.4.}. This is a trail type conversion, and, like all such conversions, does not affect the diagram.
\end{definition}

For the remaining two moves, we need to define what we mean by a \emph{level diagram} and isotopies of these diagrams.

\begin{definition}[level diagram, standard level diagram set]A \emph{level diagram} of a tar sentence is a partial reconstruction of a link diagram from the tar sentence by the following method.
\begin{enumerate}
\item A trail element of the sentence or a basic word of a flat word set is chosen.
\item The binding circle is drawn and labeled according to the circle word.
\item Each basic word in the element is drawn.
\item Labels are removed from the binding circle according to the rules:
	\begin{enumerate}
	\item If the trail element is virtual or welded, all labels except those at the ends of the trail are removed, otherwise
	\item a label is removed if it does not belong to a virtual word and the basic words that contain this label as their first or last label both precede or follow the chosen element.
	\end{enumerate}
\end{enumerate}
We let the \emph{standard level diagram set} be the set of level diagrams given by taking the level set of each trail element except for flat word sets and taking the level set diagrams of each basic word in a flat word set.

Figures \ref{F:kishino} and \ref{F:kishinoproof} show a link diagram and the corresponding level diagrams.
\end{definition}

\begin{definition}[ambient isotopy]An \emph{ambient isotopy} is a continuous mapping $f: S^2 \times [0, 1] \to S^2$ such that every $f_t: S^2 \to S^2$ given by $f_t(X) = f(X, t)$ is a homeomorphism of $S^2$.
\end{definition}

\begin{definition}[level isotopy]A \emph{level isotopy} is an isotopy of a level diagram $L$ that is determined by an ambient isotopy $f$ of $S^2$ such that $f(L, 0) = L$.
\end{definition}

\begin{definition}[riser isotopy]A \emph{riser isotopy} of a tar sentence is a set of level isotopies $\{ f^1, f^2, \dots, f^n \}$, one for each corresponding level diagram $d_1, d_2, \dots, d_n$ in the standard level diagram set, such that all riser marks except one, which we assume is at position $r$, are constant under $f^i$ if they are drawn in $d_i$ and for every $i, j$ the equality $f^i(r, t)  = f^j(r, t)$ for all $t \in [0, 1]$ whenever $d_i$ and $d_j$ both draw the riser label at $r$. 
\end{definition}

\begin{definition}[riser move]\label{D:risermove}A \emph{riser move} is the replacement of all basic words by the basic words of the diagram that results from choosing a riser isotopy $f^1, f^2, \dots, f^n$ on levels $d_1, d_2, \dots, d^n$ and replacing each $f^i(d_i, 0)$ by $f^i(d_i, 1)$.
\end{definition}

Figure \ref{F:riser_move} shows part of a riser move. The trail represented by the basic word \texttt{+1.3.} is altered to the trail represented by \texttt{+1.2.3.} to accommodate the new location of riser mark \texttt{2}, which has moved in the manner shown by the dashed line. The circle word must, likewise, by modified to contain \texttt{1.3.2.4.} instead of \texttt{1.2.3.4.}.

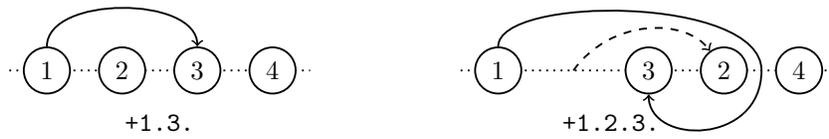
\begin{figure}[ht]
\begin{center}
\begin{tikzpicture}[line width=.7pt]

	\begin{scope}[cm={1,0,0,1,(0,0)}]
	\draw[dotted] (-.5, 0) -- (3.5, 0);
	\node[draw=black, circle, fill=white] (v1) at (0, 0) {$1$};
	\node[draw=black, circle, fill=white] (v2) at (1, 0) {$2$};
	\node[draw=black, circle, fill=white] (v3) at (2, 0) {$3$};
	\node[draw=black, circle, fill=white] (v4) at (3, 0) {$4$};
	\draw[->] (v1) .. controls (0, 1) and (2, 1) .. (v3);
	\node at (1.5, -.7) {\texttt{+1.3.}};
	\end{scope}

	\begin{scope}[cm={1,0,0,1,(6,0)}]
	\draw[dotted] (-.5, 0) -- (4.5, 0);
	\node[draw=black, circle, fill=white] (v1) at (0, 0) {$1$};
	\node[draw=black, circle, fill=white] (v2) at (3, 0) {$2$};
	\node[draw=black, circle, fill=white] (v3) at (2, 0) {$3$};
	\node[draw=black, circle, fill=white] (v4) at (4, 0) {$4$};
	\draw[->] (v1) .. controls (0, 1) and (3.5, 1) .. (3.5, 0) .. controls (3.5, -1) and (2, -1) .. (v3);
	\draw[->, dashed] (1, 0) .. controls (1.5, .7) and (2.5, .7) .. (v2);
	\node at (1.5, -.7) {\texttt{+1.2.3.}};
	\end{scope}

\end{tikzpicture}
\caption{A riser move by riser \texttt{2}.}\label{F:riser_move}
\end{center}
\end{figure}

\begin{definition}[trail move]\label{D:trailmove}A \emph{trail move} is the replacement of a basic word by the word describing the result of a level isotopy that keeps all labels appearing on the level diagram constant.
\end{definition}

\begin{definition}[rigid flip]\label{D:rigidflip}A \emph{rigid flip} is the replacement of a rigid word set \texttt{!}$w_1w_2$ by the rigid word set \texttt{!}$w_2'w_1'$ where $w_1'$ and $w_2'$ are the result of changing all \texttt{+} to \texttt{-} and all \texttt{-} to \texttt{+} in $w_1$ and $w_2$ respectively. A rigid flip is allowed if and only if the sentences obtained by replacing \texttt{!}$w_1w_2$ with $w_1w_2$ and with $w_2'w_1'$ are equivalent.
\end{definition}

\begin{theorem}For tar sentences $L_1$ and $L_2$, there is a sequence of tar sentence moves that transforms $L_1$ into $L_2$ if and only if the links represented by $L_1$ and $L_2$ are equivalent.
\end{theorem}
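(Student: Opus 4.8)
The statement is an ``if and only if,'' so the plan is to establish \emph{soundness} (a sequence of moves yields an equivalent link) and \emph{completeness} (equivalent links are connected by moves) separately. Soundness is a finite, move-by-move verification; completeness is where the real work lies, and I would prove it by reducing the equivalence of the represented links---generated by the local changes of Figure \ref{F:reid}---to the existence of tar-sentence moves, using the two-way correspondence between sentences and fully partitioned diagrams guaranteed by the reconstruction proposition.

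For soundness I would sort the eight moves by their diagrammatic effect. The moves \emph{trail swap}, \emph{trail type conversion}, and \emph{trail move} are each defined so as to induce no change to the drawn diagram (the first two reorder or relabel words; a trail move replaces a basic word by the word of a level isotopy, which is by definition the restriction of an ambient isotopy of $S^2$), so they fix the equivalence class outright. A \emph{riser move} is assembled from a riser isotopy, i.e.\ a compatible family of level isotopies agreeing on the moved riser mark; the first step here is to check that these level isotopies glue to a single ambient isotopy of the whole partitioned diagram, after which the equivalence class is again unchanged. \emph{Riser addition} and \emph{riser removal} insert or delete a degree-two vertex, which is a subdivision of an arc or its inverse and hence an isotopy of the underlying link; the bookkeeping in the definition (parity cases, relabeling through the circle word, word-type inheritance) must be checked to reproduce exactly the same embedded curve, but no crossing is created or destroyed. \emph{Duplicate removal} is precisely a Reidemeister-I curl cancellation (as the passage from \texttt{+1.1.3.} to \texttt{-1.3.} illustrates), and I would verify that each of its five cases realizes an allowed R1-type move for the relevant crossing type. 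Finally, a \emph{rigid flip} is \emph{defined} to be legal only when the two resulting sentences are equivalent, so soundness for it is immediate.

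For completeness I would proceed in two stages. Stage 1 (\textbf{representation independence}): show that any two tar sentences whose reconstructed fully partitioned diagrams are ambient isotopic in $S^2$ are joined by a sequence of moves. This decomposes into independence from the arbitrary choices made in building a sentence from a diagram: the starting label and cyclic presentation of the circle word (absorbed by the free choice in the final construction step and by trail swaps); the labeling of riser marks; the ordering of trail elements sharing no classical crossing (trail swap); the number and placement of riser marks (riser addition and removal together with riser moves); and, crucially, the choice of binding circle. Stage 2 (\textbf{realizing local moves}): show that each local change permitted for the relevant link type in Figure \ref{F:reid} can be realized by a finite composition of tar moves. The enabling idea is that riser addition lets us introduce riser marks to localize a move inside a small disk, after which the move becomes a level or trail move, or a duplicate removal/addition combined with trail swaps. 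Concatenating the two stages finishes the argument: given equivalent links, choose representative diagrams connected by finitely many local moves, realize each by Stage 2, and use Stage 1 to reconcile the intermediate choices of binding circle, labeling, and riser marks.

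The main obstacle is the binding-circle independence inside Stage 1. The entire tar sentence is read off relative to a single self-avoiding loop meeting every riser mark, and two legitimate binding circles for the same diagram can produce combinatorially very different sentences. I would attack this by showing that any two binding circles are connected through a finite chain of elementary modifications---sliding the loop across one crossing or past one riser mark---and that each elementary modification is mirrored by a bounded sequence of riser moves and trail moves, possibly after a riser addition to create room. Establishing that these elementary binding-circle modifications generate all isotopies of the pair (diagram, binding circle), and that the compatibility condition in the definition of riser isotopy can always be arranged, is the technical heart of the proof. Within Stage 2 the triangle move $a3$ and its variants $b3, c3, d3, g3$ will be the most delicate, since they involve three strands and must be reassembled from single-trail level isotopies after localization.
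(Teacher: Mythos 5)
There is a genuine gap in your proposal, and it sits in the direction you treat as routine. You claim that a trail move ``induces no change to the drawn diagram'' because a level isotopy is the restriction of an ambient isotopy of $S^2$, and that the level isotopies constituting a riser move should ``glue to a single ambient isotopy of the whole partitioned diagram.'' Both claims misread what a level diagram is: it contains only the chosen trail element, the binding circle, and a \emph{proper subset} of the labels (those belonging to a virtual word, or incident to one word preceding and one word following the chosen element); every other trail is absent from it. A level isotopy may therefore drag the chosen trail across regions occupied by trails that are not drawn, creating and destroying crossings with them. Indeed, this is exactly how the paper realizes the local moves of Fig.~\ref{F:reid}: move $a1$ is produced by two riser additions and a riser move, and $a2$ by a riser addition and a riser move, so a riser move visibly creates crossings. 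If your soundness claim were true, trail and riser moves could never change the diagram, and your own Stage 2 (which realizes local moves ``by a level or trail move'' after localization) would be impossible; the proposal is internally inconsistent on this point. The gluing check you propose for riser moves would likewise simply fail: the level isotopies in a riser isotopy are only required to agree on the trajectory of the single moved riser mark, and in the cases that matter they cannot extend to a common ambient isotopy of the full diagram.

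What the ``only if'' direction actually requires---and where the paper spends its effort---is an argument that the labels retained on each level diagram act as punctures confining trail and riser moves to \emph{allowed} local changes; in particular, that the forbidden detour moves of Fig.~\ref{F:forbid} cannot be realized, because when the central crossings are virtual or welded, a riser mark of a virtual or welded trail necessarily survives as a puncture on the classical trail's level diagram and blocks the required isotopy. Your proposal never engages with the forbidden moves, and once the mistaken soundness argument is removed there is no mechanism left to exclude them. Two smaller points: duplicate removal is not a Reidemeister-I cancellation---trails are self-avoiding by construction, and the passage \texttt{+1.1.3.} to \texttt{-1.3.} is an isotopy pushing a wiggle across the binding circle, not a curl removal; on the other hand, your Stage 1 (independence of the sentence from the choices of binding circle, labels, and riser marks) identifies a real issue that the paper's own proof passes over in silence, so that part of your plan is a genuine contribution rather than a defect.
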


\begin{proof}We proceed by demonstrating that every move of Fig. \ref{F:reid} can be performed by tar sentence moves and that none of the forbidden moves of Fig. \ref{F:forbid} can be performed by tar sentence moves.

For moves $a1$, $b1$, $c1$, $d1$, and $e1$, we see that the right-hand side may be a basic word \texttt{+1.2.} with \texttt{1.2.} appearing in the circle word by a suitable choice of labels, riser additions, and riser removals. By trail swaps, this may be at whatever position in the sentence is necessary for the appropriate trail type conversion. The left-hand side may then be written as \texttt{+1.3.-3.4.+4.2.} with \texttt{1.3.4.2.} appearing in the circle word by two riser additions and a riser move. To move in the reverse direction, we reverse these moves, performing a riser move and then two riser removals.

For moves $a2$, $b2$, $c2$, $d2$, and $e2$, we assume that the arcs on the right-hand side are each a basic word. If not, we may perform riser additions and removals to make them so. By a riser addition and a riser move, we can create the left-hand side. To move from the right-hand side to the left-hand side, we note that riser additions and removals will suffice because the number of crossings is, by definition, minimal. If each of the arcs on the right-hand side has no interior riser marks, then a minimal drawing of them will have no crossings.

For the moves of columns $3$ and $4$, we note that welded and virtual words may be replaced freely by alternate words because their level diagrams consist solely of their endpoints and trail and so they are isotopic to every path from their start to their end. This handles all of the cases involving $2$ or more virtual or welded crossings. We note that the remaining moves may be constructed in this same way because the trails can be cut sufficiently to allow this freedom for all non-virtual and non-welded crossings. Figure \ref{F:forbid} shows moves that are forbidden, and it is not obvious that these cannot be constructed.

To see that the forbidden moves cannot be constructed, we notice that because the central crossings are virtual or welded, then one of the trails involved must either be virtual or welded. As such, a riser mark must appear in the level diagram of the classical trail. This acts as a puncture to prevent any trail move that would be necessary to construct this move. If such a move is going to be constructed, it cannot be locally, and this is sufficient prevention.

The last move to construct is $g1$. Figure \ref{F:rigflip} shows how to do this by three riser moves and a rigid flip.

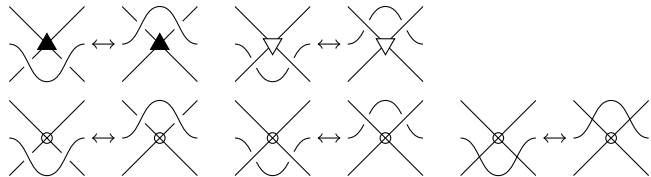
\begin{figure}[ht]
\begin{center}
\begin{tikzpicture}[every node/.style={scale=.5}, scale=.5]
	\begin{scope}[cm={1,0,0,1,(0,0)}]
	\draw[name path=a1, draw=none] (0, 2) -- (2, 0);
	\draw[name path=a2, draw=none] (0, 0) -- (2, 2);
	\draw[name path=a3, draw=none] (0, 1) .. controls (.5, 1) and (.5, 0) .. (1, 0) .. controls (1.5, 0) and (1.5, 1) .. (2, 1);
	\draw (0, 2) -- (2, 0);
	\draw (0, 0) -- (2, 2);
	\draw[name intersections={of=a2 and a3}] (intersection-1) node[circle, draw=white, fill=white, inner sep=1mm] {};
	\draw[name intersections={of=a1 and a3}] (intersection-1) node[circle, draw=white, fill=white, inner sep=1mm] {};
	\draw[name intersections={of=a1 and a2}] (intersection-1) node[regular polygon, regular polygon sides=3, draw=black, fill=black, inner sep=1mm] {};
	\draw (0, 1) .. controls (.5, 1) and (.5, 0) .. (1, 0) .. controls (1.5, 0) and (1.5, 1) .. (2, 1);
	\draw[name path=a1, draw=none] (3, 2) -- (5, 0);
	\draw[name path=a2, draw=none] (3, 0) -- (5, 2);
	\draw[name path=a3, draw=none] (3, 1) .. controls (3.5, 1) and (3.5, 2) .. (4, 2) .. controls (4.5, 2) and (4.5, 1) .. (5, 1);
	\draw (3, 2) -- (5, 0);
	\draw (3, 0) -- (5, 2);
	\draw[name intersections={of=a2 and a3}] (intersection-1) node[circle, draw=white, fill=white, inner sep=1mm] {};
	\draw[name intersections={of=a1 and a3}] (intersection-1) node[circle, draw=white, fill=white, inner sep=1mm] {};
	\draw[name intersections={of=a1 and a2}] (intersection-1) node[regular polygon, regular polygon sides=3, draw=black, fill=black, inner sep=1mm] {};
	\draw (3, 1) .. controls (3.5, 1) and (3.5, 2) .. (4, 2) .. controls (4.5, 2) and (4.5, 1) .. (5, 1);
	\draw[<->] (2.2, 1) -- (2.8, 1);
	\end{scope}
	\begin{scope}[cm={1,0,0,1,(6,0)}]
	\draw[name path=a1, draw=none] (0, 2) -- (2, 0);
	\draw[name path=a2, draw=none] (0, 0) -- (2, 2);
	\draw[name path=a3, draw=none] (0, 1) .. controls (.5, 1) and (.5, 0) .. (1, 0) .. controls (1.5, 0) and (1.5, 1) .. (2, 1);
	\draw (0, 1) .. controls (.5, 1) and (.5, 0) .. (1, 0) .. controls (1.5, 0) and (1.5, 1) .. (2, 1);
	\draw[name intersections={of=a2 and a3}] (intersection-1) node[circle, draw=white, fill=white, inner sep=1mm] {};
	\draw[name intersections={of=a1 and a3}] (intersection-1) node[circle, draw=white, fill=white, inner sep=1mm] {};
	\draw (0, 2) -- (2, 0);
	\draw (0, 0) -- (2, 2);
	\draw[name intersections={of=a1 and a2}] (intersection-1) node[regular polygon, regular polygon sides=3, rotate=180, draw=black, fill=white, inner sep=1mm] {};
	\draw[name path=a1, draw=none] (3, 2) -- (5, 0);
	\draw[name path=a2, draw=none] (3, 0) -- (5, 2);
	\draw[name path=a3, draw=none] (3, 1) .. controls (3.5, 1) and (3.5, 2) .. (4, 2) .. controls (4.5, 2) and (4.5, 1) .. (5, 1);
	\draw (3, 1) .. controls (3.5, 1) and (3.5, 2) .. (4, 2) .. controls (4.5, 2) and (4.5, 1) .. (5, 1);
	\draw[name intersections={of=a2 and a3}] (intersection-1) node[circle, draw=white, fill=white, inner sep=1mm] {};
	\draw[name intersections={of=a1 and a3}] (intersection-1) node[circle, draw=white, fill=white, inner sep=1mm] {};
	\draw (3, 2) -- (5, 0);
	\draw (3, 0) -- (5, 2);
	\draw[name intersections={of=a1 and a2}] (intersection-1) node[regular polygon, regular polygon sides=3, rotate=180, draw=black, fill=white, inner sep=1mm] {};
	\draw[<->] (2.2, 1) -- (2.8, 1);
	\end{scope}
	\begin{scope}[cm={1,0,0,1,(0,-2.5)}]
	\draw[name path=a1, draw=none] (0, 2) -- (2, 0);
	\draw[name path=a2, draw=none] (0, 0) -- (2, 2);
	\draw[name path=a3, draw=none] (0, 1) .. controls (.5, 1) and (.5, 0) .. (1, 0) .. controls (1.5, 0) and (1.5, 1) .. (2, 1);
	\draw (0, 2) -- (2, 0);
	\draw (0, 0) -- (2, 2);
	\draw[name intersections={of=a2 and a3}] (intersection-1) node[circle, draw=white, fill=white, inner sep=1mm] {};
	\draw[name intersections={of=a1 and a3}] (intersection-1) node[circle, draw=white, fill=white, inner sep=1mm] {};
	\draw[name intersections={of=a1 and a2}] (intersection-1) node[circle, draw=black, fill=none, inner sep=1mm] {};
	\draw (0, 1) .. controls (.5, 1) and (.5, 0) .. (1, 0) .. controls (1.5, 0) and (1.5, 1) .. (2, 1);
	\draw[name path=a1, draw=none] (3, 2) -- (5, 0);
	\draw[name path=a2, draw=none] (3, 0) -- (5, 2);
	\draw[name path=a3, draw=none] (3, 1) .. controls (3.5, 1) and (3.5, 2) .. (4, 2) .. controls (4.5, 2) and (4.5, 1) .. (5, 1);
	\draw (3, 2) -- (5, 0);
	\draw (3, 0) -- (5, 2);
	\draw[name intersections={of=a2 and a3}] (intersection-1) node[circle, draw=white, fill=white, inner sep=1mm] {};
	\draw[name intersections={of=a1 and a3}] (intersection-1) node[circle, draw=white, fill=white, inner sep=1mm] {};
	\draw[name intersections={of=a1 and a2}] (intersection-1) node[circle, draw=black, fill=none, inner sep=1mm] {};
	\draw (3, 1) .. controls (3.5, 1) and (3.5, 2) .. (4, 2) .. controls (4.5, 2) and (4.5, 1) .. (5, 1);
	\draw[<->] (2.2, 1) -- (2.8, 1);
	\end{scope}
	\begin{scope}[cm={1,0,0,1,(6,-2.5)}]
	\draw[name path=a1, draw=none] (0, 2) -- (2, 0);
	\draw[name path=a2, draw=none] (0, 0) -- (2, 2);
	\draw[name path=a3, draw=none] (0, 1) .. controls (.5, 1) and (.5, 0) .. (1, 0) .. controls (1.5, 0) and (1.5, 1) .. (2, 1);
	\draw (0, 1) .. controls (.5, 1) and (.5, 0) .. (1, 0) .. controls (1.5, 0) and (1.5, 1) .. (2, 1);
	\draw[name intersections={of=a2 and a3}] (intersection-1) node[circle, draw=white, fill=white, inner sep=1mm] {};
	\draw[name intersections={of=a1 and a3}] (intersection-1) node[circle, draw=white, fill=white, inner sep=1mm] {};
	\draw (0, 2) -- (2, 0);
	\draw[name intersections={of=a1 and a2}] (intersection-1) node[circle, draw=black, fill=none, inner sep=1mm] {};
	\draw (0, 0) -- (2, 2);
	\draw[name path=a1, draw=none] (3, 2) -- (5, 0);
	\draw[name path=a2, draw=none] (3, 0) -- (5, 2);
	\draw[name path=a3, draw=none] (3, 1) .. controls (3.5, 1) and (3.5, 2) .. (4, 2) .. controls (4.5, 2) and (4.5, 1) .. (5, 1);
	\draw (3, 1) .. controls (3.5, 1) and (3.5, 2) .. (4, 2) .. controls (4.5, 2) and (4.5, 1) .. (5, 1);
	\draw[name intersections={of=a2 and a3}] (intersection-1) node[circle, draw=white, fill=white, inner sep=1mm] {};
	\draw[name intersections={of=a1 and a3}] (intersection-1) node[circle, draw=white, fill=white, inner sep=1mm] {};
	\draw (3, 2) -- (5, 0);
	\draw[name intersections={of=a1 and a2}] (intersection-1) node[circle, draw=black, fill=none, inner sep=1mm] {};
	\draw (3, 0) -- (5, 2);
	\draw[<->] (2.2, 1) -- (2.8, 1);
	\end{scope}
	\begin{scope}[cm={1,0,0,1,(12,-2.5)}]
	\draw[name path=a1, draw=none] (0, 2) -- (2, 0);
	\draw[name path=a2, draw=none] (0, 0) -- (2, 2);
	\draw[name path=a3, draw=none] (0, 1) .. controls (.5, 1) and (.5, 0) .. (1, 0) .. controls (1.5, 0) and (1.5, 1) .. (2, 1);
	\draw (0, 1) .. controls (.5, 1) and (.5, 0) .. (1, 0) .. controls (1.5, 0) and (1.5, 1) .. (2, 1);
	\draw (0, 2) -- (2, 0);
	\draw[name intersections={of=a1 and a2}] (intersection-1) node[circle, draw=black, fill=none, inner sep=1mm] {};
	\draw (0, 0) -- (2, 2);
	\draw[name path=a1, draw=none] (3, 2) -- (5, 0);
	\draw[name path=a2, draw=none] (3, 0) -- (5, 2);
	\draw[name path=a3, draw=none] (3, 1) .. controls (3.5, 1) and (3.5, 2) .. (4, 2) .. controls (4.5, 2) and (4.5, 1) .. (5, 1);
	\draw (3, 1) .. controls (3.5, 1) and (3.5, 2) .. (4, 2) .. controls (4.5, 2) and (4.5, 1) .. (5, 1);
	\draw (3, 2) -- (5, 0);
	\draw[name intersections={of=a1 and a2}] (intersection-1) node[circle, draw=black, fill=none, inner sep=1mm] {};
	\draw (3, 0) -- (5, 2);
	\draw[<->] (2.2, 1) -- (2.8, 1);
	\end{scope}
\end{tikzpicture}
\caption{Forbidden local moves}\label{F:forbid}
\end{center}
\end{figure}

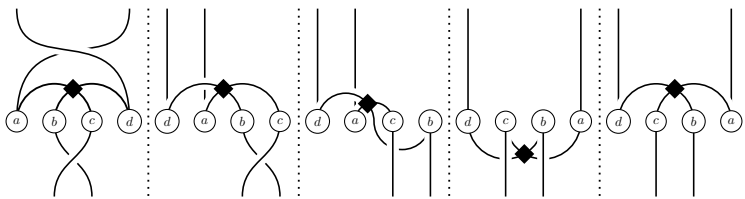
\begin{figure}[ht]
\begin{center}
\begin{tikzpicture}[scale=.5, every node/.style={scale=.5}]

	\begin{scope}
	\draw[double=black,draw=white,line width=2pt] (0, 0) .. controls (0, 3) and (3, 1) .. (3, 3);
	\draw[double=black,draw=white,line width=2pt] (3, 0) .. controls (3, 3) and (0, 1) .. (0, 3);
	\draw[double=black,draw=white,line width=2pt] (1, 0) .. controls (1, -1) and (2, -1) .. (2, -2);
	\draw[double=black,draw=white,line width=2pt] (2, 0) .. controls (2, -1) and (1, -1) .. (1, -2);
	\draw[line width=.7pt, name path=a1] (0, 0) arc[start angle=180, end angle=0, radius=1];
	\draw[line width=.7pt, name path=a2] (1, 0) arc[start angle=180, end angle=0, radius=1];
	\draw[name intersections={of=a1 and a2}, line width=.7pt] (intersection-1) node[draw=black, fill=black, diamond] {};

	\node[circle, draw=black, fill=white] (a) at (0, 0) {$a$};
	\node[circle, draw=black, fill=white] (b) at (1, 0) {$b$};
	\node[circle, draw=black, fill=white] (c) at (2, 0) {$c$};
	\node[circle, draw=black, fill=white] (d) at (3, 0) {$d$};
	\end{scope}

	\draw[line width=.7pt, dotted] (3.5, 3) -- (3.5, -2);
	\draw[line width=.7pt, dotted] (7.5, 3) -- (7.5, -2);
	\draw[line width=.7pt, dotted] (11.5, 3) -- (11.5, -2);
	\draw[line width=.7pt, dotted] (15.5, 3) -- (15.5, -2);
	\begin{scope}[cm={1,0,0,1,(3,0)}]
	
	\draw[double=black,draw=white,line width=2pt] (2, 0) -- (2, 3);
	\draw[double=black,draw=white,line width=2pt] (1, 0) -- (1, 3);
	\draw[double=black,draw=white,line width=2pt, name path=a1] (1, 0) arc[start angle=180, end angle=0, radius=1];
	\draw[double=black,draw=white,line width=2pt, name path=a2] (2, 0) arc[start angle=180, end angle=0, radius=1];
	\draw[double=black,draw=white,line width=2pt] (3, 0) .. controls (3, -1) and (4, -1) .. (4, -2);
	\draw[double=black,draw=white,line width=2pt] (4, 0) .. controls (4, -1) and (3, -1) .. (3, -2);
	\draw[name intersections={of=a1 and a2}, line width=.7pt] (intersection-1) node[draw=black, fill=black, diamond] {};

	\node[circle, draw=black, fill=white] (a) at (2, 0) {$a$};
	\node[circle, draw=black, fill=white] (b) at (3, 0) {$b$};
	\node[circle, draw=black, fill=white] (c) at (4, 0) {$c$};
	\node[circle, draw=black, fill=white] (d) at (1, 0) {$d$};
	\end{scope}

	\begin{scope}[cm={1,0,0,1,(8,0)}]

	\draw[double=black,draw=white,line width=2pt] (1, 0) -- (1, 3);
	\draw[double=black,draw=white,line width=2pt] (0, 0) -- (0, 3);
	\draw[double=black,draw=white,line width=2pt, name path=a1] (0, 0) arc[start angle=180, end angle=0, radius=.75] arc[radius=.75, start angle=-180, end angle=0];
	\draw[double=black,draw=white,line width=2pt, name path=a2] (1, 0) arc[start angle=180, end angle=0, radius=.5];
	\draw[double=black,draw=white,line width=2pt] (2, 0) -- (2, -2);
	\draw[double=black,draw=white,line width=2pt] (3, 0) -- (3, -2);
	\draw[name intersections={of=a1 and a2}, line width=.7pt] (intersection-1) node[draw=black, fill=black, diamond] {};

	\node[circle, draw=black, fill=white] (a) at (1, 0) {$a$};
	\node[circle, draw=black, fill=white] (b) at (3, 0) {$b$};
	\node[circle, draw=black, fill=white] (c) at (2, 0) {$c$};
	\node[circle, draw=black, fill=white] (d) at (0, 0) {$d$};
	
	\end{scope}

	\begin{scope}[cm={1,0,0,1,(12,0)}]

	\draw[double=black,draw=white,line width=2pt] (3, 0) -- (3, 3);
	\draw[double=black,draw=white,line width=2pt] (0, 0) -- (0, 3);
	\draw[double=black,draw=white,line width=2pt, name path=a1] (0, 0) arc[start angle=-180, end angle=0, radius=1];
	\draw[double=black,draw=white,line width=2pt, name path=a2] (1, 0) arc[start angle=-180, end angle=0, radius=1];
	\draw[double=black,draw=white,line width=2pt] (2, 0) -- (2, -2);
	\draw[double=black,draw=white,line width=2pt] (1, 0) -- (1, -2);
	\draw[name intersections={of=a1 and a2}, line width=.7pt] (intersection-1) node[draw=black, fill=black, diamond] {};

	\node[circle, draw=black, fill=white] (a) at (3, 0) {$a$};
	\node[circle, draw=black, fill=white] (b) at (2, 0) {$b$};
	\node[circle, draw=black, fill=white] (c) at (1, 0) {$c$};
	\node[circle, draw=black, fill=white] (d) at (0, 0) {$d$};
	
	\end{scope}

	\begin{scope}[cm={1,0,0,1,(16,0)}]

	\draw[double=black,draw=white,line width=2pt] (3, 0) -- (3, 3);
	\draw[double=black,draw=white,line width=2pt] (0, 0) -- (0, 3);
	\draw[double=black,draw=white,line width=2pt, name path=a1] (0, 0) arc[start angle=180, end angle=0, radius=1];
	\draw[double=black,draw=white,line width=2pt, name path=a2] (1, 0) arc[start angle=180, end angle=0, radius=1];
	\draw[double=black,draw=white,line width=2pt] (2, 0) -- (2, -2);
	\draw[double=black,draw=white,line width=2pt] (1, 0) -- (1, -2);
	\draw[name intersections={of=a1 and a2}, line width=.7pt] (intersection-1) node[draw=black, fill=black, diamond] {};

	\node[circle, draw=black, fill=white] (a) at (3, 0) {$a$};
	\node[circle, draw=black, fill=white] (b) at (2, 0) {$b$};
	\node[circle, draw=black, fill=white] (c) at (1, 0) {$c$};
	\node[circle, draw=black, fill=white] (d) at (0, 0) {$d$};
	
	\end{scope}
\end{tikzpicture}\caption{Constructing the rigid vertex flip}\label{F:rigflip}
\end{center}
\end{figure}
\end{proof}

\subsection{A representation whose size depends only on the number of words}

It may be that the tar sentence of a particular diagram is long but that the number of words in the sentence is low. In this case, it would be convenient to have a representation that exploits this fact. We show such a representation here. To achieve this representation we take the following steps.

\begin{enumerate}
\item Rename the labels so that the circle word is \texttt{;1.2.3.}$\cdots n$\texttt{.}. We regard the labels as having the corresponding numeric values.
\item Perform duplicate removals repeatedly until no more duplicate removals are possible.
\item Replace the first label, $l$, of each basic word with \texttt{*} $l$.
\item Replace the last label, $l$, of each basic word with \texttt{*} $l$.
\item We now regard the character \texttt{*} as part of the label that follows it.
\item Place a \texttt{,} between each pair of adjacent basic words.
\item Let $a$, $b$ be arbitrary labels. Replace each subword \texttt{+}$a$$b$ with $a$$b$\texttt{|-}$b$ if $a$ < $b$ and otherwise with $b$$a$\texttt{|-}$b$. Similarly, replace each subword \texttt{-}$a$$b$ with $a$$b$\texttt{|+}$b$ if $b$ < $a$ and otherwise with $b$$a$\texttt{|+}$b$.
\item Shorten any phrase \texttt{|+}$a$$w$ or \texttt{|-}$a$$w$ to $w$ where $a$ is a label and $w$ is one of \texttt{, \{ \} ( ) ! ;}.
\item Repeat the previous two steps until no further changes have been made. The arcs of the transformed trails are now separated by \texttt{|} characters, and the transformed trails are separated by other punctuation. 
\item In a sequence of $n$ square matrices with all entries $0$ and dimension $2n$, modify the matrices as follows.
\begin{enumerate}
\item{For every pair of consecutive labels $i$, $j$ let $r = 2i$. If $i$ contains a \texttt{*}, subtract $1$ from $r$. Let $c$ be the analogous number for $j$.}
\item{If $ij$ arose from the transformation of the $k$th basic word, add $1$ to the $r$th row and $c$th column of the $k$th matrix.}
\end{enumerate}
\item Let the entry $M_{1,1}$ of each matrix be determined by its type, with $0$ classical, $1$ virtual, $2$ welded, $3$ rigid, and higher numbers used for flat crossing hierarchies.
\item The sequence of matrices is a representation of the same link as the tar sentence from which it was constructed.
\end{enumerate}

\begin{proposition}The sequence of matrices given can be used to construct a tar sentence, up to orientation changes of the components.
\end{proposition}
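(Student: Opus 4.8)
The plan is to invert the twelve-step encoding one stage at a time, showing that each stage discards only information that is either redundant or absorbed into the permitted orientation change. I would begin with the global data. The common dimension $2n$ of the matrices returns the number of labels $n$, and since the renaming in step (1) forced the circle word to be \texttt{;1.2.}$\cdots n$\texttt{.}, the circle word of the reconstructed sentence is determined outright. The number of matrices equals the number of basic words (which, as noted after the definition of a tar sentence, is again $n$), and the entry $M_{1,1}$ of the $k$th matrix returns the type of the $k$th word through the dictionary of step (11): $0$ classical, $1$ virtual, $2$ welded, $3$ rigid, and matching values $\geq 4$ marking a shared flat hierarchy.

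Next I would recover each basic word from its matrix. A nonzero entry at $(r,c)$ decodes, by step (10), into an arc between labels $\lceil r/2\rceil$ and $\lceil c/2\rceil$, with an odd index $2i-1$ marking the starred (endpoint) occurrence of label $i$ and an even index $2i$ an interior occurrence; the value of the entry is the multiplicity of that arc. The sign convention of steps (7)--(9) is recoverable, since an arc was written in increasing label order exactly when it is a \texttt{+} (upper) arc and in decreasing order exactly when it is a \texttt{-} (lower) arc, so comparing the row- and column-labels returns the hemisphere of each arc. Reassembling the word then amounts to finding a trail through the resulting signed multigraph on the labels that starts and ends at the two starred vertices and whose arc signs alternate; the separators \texttt{|} and the leading sign introduced by steps (7)--(9) are read back directly from this trail.

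I would then reassemble the sentence. The $M_{1,1}$ values, together with the order in which the matrices are listed, reinstate the bracketing: words of type $1$ are wrapped in \texttt{[]}; those of type $2$ in \texttt{()}, up if they precede every classical, flat, or rigid word and down otherwise; consecutive rigid words are paired under a \texttt{!}; and maximal runs sharing a flat value $\geq 4$ are enclosed in \texttt{\{\}}. The listing order simultaneously records which classical word crosses over which. Appending the recovered circle word yields a syntactically valid tar sentence, and by the drawing rules of the construction it redraws the original diagram up to isotopy, the only residual freedom being the choice, per component, of which starred endpoint begins each trail -- precisely an orientation change of that component.

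The main obstacle I anticipate is the middle stage: showing that the trail through the signed arc-multigraph is determined up to this orientation reversal. The matrix stores only the multiset of signed arcs and the endpoint markers, and at a label visited several times (as in \texttt{+1.2.4.2.4.}) several interior arc-ends are conflated at the single even index $2i$, so a priori several distinct Eulerian trails are consistent with one matrix. I would resolve this by invoking the deterministic drawing convention -- arc ends evenly spaced to minimise crossings, with no self-crossing permitted -- which fixes the reconnection at each repeated label to a single isotopy class, together with the forced alternation of hemisphere signs, which rules out every traversal but the orientation-reversed one. Establishing that these two constraints always single out a unique diagram up to component orientation is the crux; the remaining bookkeeping of brackets, over/under order, and up/down placement is routine once it is in hand.
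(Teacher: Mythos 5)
Your proposal is correct and follows essentially the same route as the paper's proof: decode each matrix into signed arcs (upper versus lower triangle giving the hemisphere), reassemble each trail by non-crossing reconnection, recover word order and type from the matrix sequence and the $M_{1,1}$ entries, take the circle word as \texttt{;1.2.}$\cdots n$\texttt{.}, and absorb the residual per-component reversal into the permitted orientation change. The crux you flag --- uniqueness of the reconnection at labels visited more than once --- is precisely the point the paper settles by the same observation you invoke, namely that trails are self-avoiding, so the arcs of one matrix can be drawn disjointly in the two hemispheres and their ends connected without crossings, yielding the basic word or its reverse.
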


\begin{proof}There are two key facts. One is that trails are self-avoiding. From this, we may reconstruct the basic words by drawing the arcs in the hemispheres of $S^2$ disjointly and then connecting the ends. Each basic word is represented by one matrix. For each matrix, we draw the curve by dividing the circle into $n$ pieces. We label the pieces with odd numbers $1, 3, \dots, 2n-1$ and the interstices with even numbers $2, 4, \dots, 2n$. Entries in the upper triangle of the matrix represent arcs in the upper hemisphere; entries in the lower triangle of the matrix represent arcs in the lower hemisphere. In either case, the arcs are drawn so that they project as chords from the region indicated by the row to the region indicated by the column. From this curve, either the basic word or its reverse may be read.

We recover the order of the basic words from the order of the matrices. We recover the type of the word, and any flat set hierarchy, from the upper left entry. We recover the basic word or its reverse by the process just described. The circle word is given by \texttt{;1.2.}$\cdots n$\texttt{.}. We may reverse at most half of the basic words so that the orientation of the link components is consistent. After this, the entire tar sentence, up to orientation change, is recovered.
\end{proof}

\begin{remark}If $k$ trails of the same type do not cross, then the sum of their matrices gives back these $k$ curves upon reconstruction. Using this technique, we may represent classical links in only $2$ matrices -- one for bridges, one for underpasses. Suitably transformed diagrams of other types, with the notable exception of those containing flat hierarchies or rigid crossings, may likewise be represented by a small number of matrices.
\end{remark}

\section{Two examples where the notation is useful}

We give two examples of the utility of the notation.

\subsection{The sentence-style notation: the Kishino virtual knot is non-classical}
In the first we use the sentence style of the notation. We examine the Kishino virtual knot of Kishino and Saito \cite{kishinoknot} and provide another proof that it is non-trivial and non-classical by use of the tar sentence and its moves. Figure \ref{F:kishino} shows a diagram of the knot and a corresponding tar sentence.

\begin{figure}[ht]
\begin{center}
\begin{tikzpicture}[line width=.7pt]
	\draw[draw=none, name path=a1] (2, 0) arc[start angle=180, end angle=0, radius=.5];
	\draw[draw=none, name path=a2] (0, 0) arc[start angle=180, end angle=0, radius=2];
	\draw[draw=none, name path=a3] (5, 0) arc[start angle=0, end angle=180, y radius=1.5, x radius=2.25] arc[start angle=-180, end angle=0, radius=1.75];
	\draw[draw=none, name path=a4] (1, 0) arc[start angle=180, end angle=0, radius=.75] arc[start angle=-180, end angle=0, radius=1.25];
	\draw[draw=none, name path=a5] (0, 0) arc[start angle=-180, end angle=0, radius=1];
	\draw[draw=none, name path=a6] (1, 0) arc[start angle=-180, end angle=0, radius=1];

	\draw (1, 0) arc[start angle=-180, end angle=0, radius=1];
	\draw[name intersections={of=a5 and a6}] (intersection-1) node[circle,draw=white,fill=white] {};
	\draw[name intersections={of=a4 and a6}] (intersection-2) node[circle,draw=white,fill=white] {};
	\draw (0, 0) arc[start angle=-180, end angle=0, radius=1];
	\draw[name intersections={of=a3 and a5}] (intersection-1) node[circle,draw=white,fill=white] {};
	\draw (1, 0) arc[start angle=180, end angle=0, radius=.75] arc[start angle=-180, end angle=0, radius=1.25];
	\draw[name intersections={of=a3 and a4}] (intersection-2) node[circle,draw=white,fill=white] {};
	\draw[name intersections={of=a4 and a1}] (intersection-1) node[circle,draw=black,fill=none] {};
	\draw (5, 0) arc[start angle=0, end angle=180, y radius=1.5, x radius=2.25] arc[start angle=-180, end angle=0, radius=1.75];
	\draw[name intersections={of=a2 and a3}] (intersection-1) node[circle,draw=black,fill=none] {};
	\draw (0, 0) arc[start angle=180, end angle=0, radius=2];
	\draw (2, 0) arc[start angle=180, end angle=0, radius=.5];
	
	\draw[dotted] (-.5, 0) -- (5.5, 0);
	\begin{scope}[every node/.style={circle,draw=black,fill=white}]
	\node at (1, 0) {$2$};
	\node at (5, 0) {$6$};
	\end{scope}
	\begin{scope}[every node/.style={circle,draw=black,fill=white}]
	\node at (0, 0) {$1$};
	\node at (4, 0) {$5$};
	\node at (2, 0) {$3$};
	\node at (3, 0) {$4$};
	\end{scope}
	
	\node at (2.5, -2.5) {As virtual: \texttt{[+3.4.][+5.1.]+6.1.5.+2.3.6.-1.3.-4.2.;1.2.3.4.5.6.}};
	\node at (2.5, -2.9) {As flat virtual: \texttt{[+3.4.][+5.1.]\{+6.1.5.+2.3.6.-1.3.-4.2.\};1.2.3.4.5.6.}};
\end{tikzpicture}
\caption{A tar sentence of the Kishino virtual knot}\label{F:kishino}
\end{center}
\end{figure}

\begin{proposition}The Kishino virtual knot is non-classical.
\end{proposition}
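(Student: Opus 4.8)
The plan is to pass to the \emph{flat virtual} shadow and argue by contraposition. If the Kishino knot were classical, it would be equivalent, through the moves permitted for virtual links, to a diagram whose only crossings are classical. Forgetting over/under information at every classical crossing is a well-defined operation that sends virtual moves to flat virtual moves, so it carries equivalent virtual diagrams to equivalent flat virtual diagrams. Applied to a diagram with no virtual crossings it yields a generic closed curve on $S^2$, and in the flat virtual theory of \cite{vkt99} every such curve reduces to a simple closed curve; that is, the flat shadow of any classical knot is the trivial flat virtual knot. Hence it is enough to show that the flat virtual knot obtained from the Kishino diagram is non-trivial.

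First I would replace the virtual tar sentence of Figure \ref{F:kishino} by its flat virtual form \texttt{[+3.4.][+5.1.]\{+6.1.5.+2.3.6.-1.3.-4.2.\};1.2.3.4.5.6.}; this is exactly the shadow, since forgetting over/under collapses the four classical trails into a single flat word set while leaving the two virtual words \texttt{[+3.4.]} and \texttt{[+5.1.]} untouched. By the Theorem, proving non-triviality of this flat virtual knot amounts to showing that no sequence of the permitted flat virtual moves (those built in the Theorem's proof from $b1$, $e1$, $b2$, $e2$, $e3$, $f3$, and $b4$) carries this sentence to one representing the trivial flat virtual knot. Equivalently, since the only move that can convert a virtual trail into a non-virtual one is the trail type conversion requiring that trail to meet no non-virtual trails, it suffices to show that the two virtual trails can never be simultaneously separated from the flat word set.

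The obstruction I would exploit is the one already isolated when ruling out the forbidden moves of Figure \ref{F:forbid}, read off from the standard level diagram set of Figure \ref{F:kishinoproof}. Because each virtual trail crosses the flat trails, the endpoint riser marks of that virtual trail persist on the level diagram of every flat trail it meets, and such a surviving riser mark acts as a puncture of $S^2$ that blocks the level isotopy one would need in order to slide the flat arc past the virtual crossing. Tracking, for each virtual word, the cyclic arrangement of the flat-trail endpoints that survive on the associated level diagrams yields a quantity that is visibly nonzero for the Kishino sentence and zero for any sentence whose virtual words are unlinked from the flat word set.

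The main obstacle is to promote this surviving-riser data into an honest invariant: I must verify that it is unchanged by \emph{every} flat virtual move, and in particular by riser additions, riser removals, and trail moves, each of which alters the set of labels present. The delicate cases are those in which a move could in principle create or destroy a crossing between a virtual trail and the flat word set; the content of the argument is that the puncture obstruction forbids exactly these, so the data is preserved. Once invariance is in hand, its non-vanishing for Kishino — directly visible in the level diagrams — shows the flat virtual shadow is non-trivial, and therefore that the Kishino virtual knot is non-classical.
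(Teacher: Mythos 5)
Your strategy differs from the paper's in its opening move: you pass to the flat virtual shadow and reduce non-classicality to non-triviality of the flat Kishino knot, whereas the paper works directly with the virtual sentence \texttt{[+3.4.][+5.1.]+6.1.5.+2.3.6.-1.3.-4.2.;1.2.3.4.5.6.} and shows that no sequence of tar-sentence moves can eliminate its virtual words. Your reduction is legitimate (the shadow of a classical diagram is indeed flat-trivial, and your claim would prove slightly more, namely non-triviality of the flat shadow itself), but it buys you nothing here: in either category the remaining task is identical, namely to show that the two virtual words can never be removed by the allowed moves.

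That remaining task is where your proposal has a genuine gap, and it is the entire mathematical content of the proposition. You never define the ``quantity'' you propose to extract from ``the cyclic arrangement of the flat-trail endpoints that survive on the associated level diagrams,'' and you never verify its invariance; worse, the justification you sketch is circular --- you assert that the puncture obstruction forbids precisely those moves that would change the data, but the existence of such an obstruction is exactly what has to be established. The paper closes this hole not with a general invariant but with a concrete configuration argument: the classical trails \texttt{+2.3.6.} and \texttt{-4.2.} form a circle around riser mark $3$, and \texttt{+6.1.5.} and \texttt{+2.3.6.} form a circle around riser mark $5$, so the four endpoints of the two virtual trails lie in three distinct regions, whereas placing all virtual crossings on a single trail would force those endpoints into only two regions; and since, by the level-diagram rules, labels belonging to virtual words are never erased from any classical level diagram, these endpoints act as punctures on every level, so no riser move, trail move, or riser addition can change the separation. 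Until you supply an argument of this kind --- an explicit definition of your obstruction together with a case check against each move in the list --- your proposal identifies the right sort of obstruction (the same one the paper uses to rule out the forbidden moves of Fig.~\ref{F:forbid}) but stops exactly where the proof has to begin.
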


\begin{proof}We proceed by examining the level diagrams and considering moves. The level diagrams are shown in Fig. \ref{F:kishinoproof}.

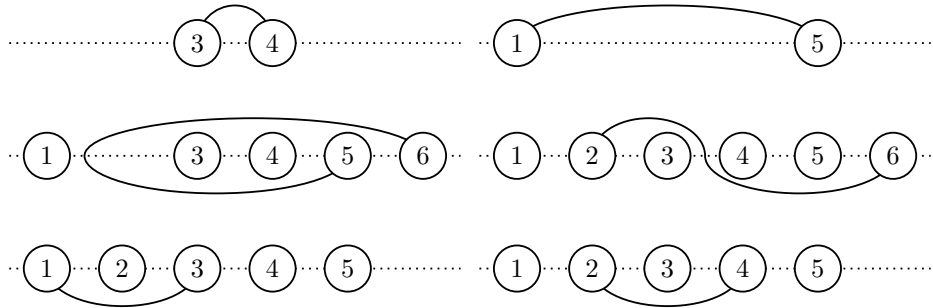
\begin{figure}[ht]
\begin{center}
\begin{tikzpicture}[line width=.7pt]

	\begin{scope}
		\draw (2, 0) arc[start angle=180, end angle=0, radius=.5];
		\draw[dotted] (-.5, 0) -- (5.5, 0);
		\begin{scope}[every node/.style={circle,draw=black,fill=white}]
		\end{scope}
		\begin{scope}[every node/.style={circle,draw=black,fill=white}]
		\node at (2, 0) {$3$};
		\node at (3, 0) {$4$};
		\end{scope}
	\end{scope}

	\begin{scope}[cm={1,0,0,1,(6.25, 0)}]
		\draw (0, 0) arc[start angle=180, end angle=0, y radius=.5, x radius=2];
		\draw[dotted] (-.5, 0) -- (5.5, 0);
		\begin{scope}[every node/.style={circle,draw=black,fill=white}]
		\node at (0, 0) {$1$};
		\node at (4, 0) {$5$};
		\end{scope}
	\end{scope}	

	\begin{scope}[cm={1,0,0,1,(0,-1.5)}]
	\draw (5, 0) arc[start angle=0, end angle=180, y radius=.5, x radius=2.25] arc[start angle=-180, end angle=0, y radius=.5, x radius=1.75];
	
	\draw[dotted] (-.5, 0) -- (5.5, 0);
		\begin{scope}[every node/.style={circle,draw=black,fill=white}]
		\node at (5, 0) {$6$};
		\node at (0, 0) {$1$};
		\node at (4, 0) {$5$};
		\node at (2, 0) {$3$};
		\node at (3, 0) {$4$};
		\end{scope}
	\end{scope}

	\begin{scope}[cm={1,0,0,1,(6.25, -1.5)}]
		\draw (1, 0) arc[start angle=180, end angle=0, y radius=.5, x radius=.75] arc[start angle=-180, end angle=0, y radius=.5, x radius=1.25];
		\draw[dotted] (-.5, 0) -- (5.5, 0);
		\begin{scope}[every node/.style={circle,draw=black,fill=white}]
		\node at (1, 0) {$2$};
		\node at (5, 0) {$6$};
		\node at (0, 0) {$1$};
		\node at (4, 0) {$5$};
		\node at (2, 0) {$3$};
		\node at (3, 0) {$4$};
		\end{scope}
	\end{scope}

	\begin{scope}[cm={1,0,0,1,(0,-3)}]
		\draw (0, 0) arc[start angle=-180, end angle=0, y radius=.5, x radius=1];
		\draw[dotted] (-.5, 0) -- (5.5, 0);
		\begin{scope}[every node/.style={circle,draw=black,fill=white}]
		\node at (0, 0) {$1$};
		\node at (1, 0) {$2$};
		\node at (4, 0) {$5$};
		\node at (2, 0) {$3$};
		\node at (3, 0) {$4$};
		\end{scope}
	\end{scope}

	\begin{scope}[cm={1,0,0,1,(6.25,-3)}]
		\draw (1, 0) arc[start angle=-180, end angle=0, y radius=.5, x radius=1];
		\draw[dotted] (-.5, 0) -- (5.5, 0);
		\begin{scope}[every node/.style={circle,draw=black,fill=white}]
		\node at (1, 0) {$2$};
		\node at (0, 0) {$1$};
		\node at (4, 0) {$5$};
		\node at (2, 0) {$3$};
		\node at (3, 0) {$4$};
		\end{scope}
	\end{scope}
\end{tikzpicture}
\caption{Level diagrams of the Kishino virtual knot}\label{F:kishinoproof}.
\end{center}
\end{figure}

We see that neither of the virtual trails may be moved by a trail move to avoid all of its crossings, nor may we place all of the virtual crossings on one trail. This is easily seen by noticing that the trails \texttt{+2.3.6.} and \texttt{-4.2.} form a circle around the riser mark $3$, and the trails \texttt{+6.1.5.} and \texttt{+2.3.6.} form a circle around the riser mark $5$. The ends of the virtual trails therefore lie in three distinct regions and cannot be made to lie on one trail because then the ends would only lie in two distinct regions.

Because the ends of the virtual trails appear on every classical level diagram, no riser move can free them from these regions, and no trail move or riser addition can change the separation of the virtual ends. Since there are two virtual crossings and must be at least two virtual trails, the number of virtual words in the sentence cannot be reduced. A link of which every tar sentence has more than $0$ virtual words cannot be classical.\end{proof}

\subsection{The reduced notation style: a $3$-bridge diagram of knot $11a1$}

Figure \ref{F:k11a1r} shows knot $11a1$ (we adopt the numbering scheme of \cite{knotatlas}) in a minimal crossing form. Figure \ref{F:k11a1b} shows this same knot in a $3$-bridge, $64$-crossing form. Because the matrices produced by the reduced notation style representation are sparse, we list only the non-zero entries of these, and we do so in symbolic form. That is, we list the corresponding sentence labels rather than the matrix coordinates. Because the representation is in the form of bridges and underpasses, we sum together the three bridge matrices and also sum together the three underpass matrices.

The result of this is the representation:
\begin{align*}
\mbox{Bridges: } &(\texttt{*}1, 5) = 1, (1, 2) = 7, (1,4) = 2, (1, \texttt{*}5) = 1, (1, 5) = 13\\
	&(2,4) = 3, (\texttt{*}3, 4) = 1, (3, 4) = 1, (5, 6) = 8, (3, 2) = 1\\
	&(\texttt{*}4, 2) = 1, (4, 2) = 4, (5, 1) = 14, (5, \texttt{*}2) = 1, (5, 2) = 4\\
	&(5, 4) = 3, (\texttt{*}6, 1) = 1, (6, 1) = 8\\
\mbox{Underpasses: } &(\texttt{*}2, \texttt{*}3) = 1, (\texttt{*}4, \texttt{*}1) = 1, (\texttt{*}5, \texttt{*}6) = 1.
\end{align*}
We may read this representation directly from the diagram by labeling the leftmost point on the central underpass $1$, continuing clockwise to choose $2$ as the leftmost of the lower underpass, $3$ as the rightmost of that same underpass, $4$ as the rightmost of the center underpass, $5$ as the rightmost of the upper underpass, and $6$ as the leftmost of the upper underpass. The binding circle is drawn as straight lines, and the upper and lower underpass are pushed slightly out of the interior of the circle.

\begin{figure}
\begin{center}
\begin{tikzpicture}[x=12pt, y=12pt, line width=.7pt, every node/.style={draw=white,fill=white,circle,inner sep=.5mm}]
    \draw (2,0) -- (2, -1) -- (4,-1);
    \node at (3, -1) {};
    \draw (5,-4) -- (5,-2.5);
    \node at (5, -3) {};
    \draw (6.5,0) -- (8,0);
    \node at (7, 0) {};
    \draw (4,0) -- (5.5,0);
    \node at (5, 0) {};
    \draw (0,0) -- (0, -3) -- (9, -3) -- (9, 1) -- (7, 1) -- (7, -2) -- (3, -2) -- (3, 1) -- (1, 1) -- (1,0);
    \node at (1, -3) {};
    \node at (8, 1) {};
    \node at (5, -2) {};
    \node at (2, 1) {};
    \draw (5,-2.5) -- (5, 2) -- (2, 2) -- (2,0);
    \node at (5, -1) {};
    \node at (5, 1) {};
    \draw (1,0) -- (1, -4) -- (5,-4);
    \draw (8,0) -- (8, 3) -- (0, 3) -- (0,0);
    \draw (4,-1) -- (5, -1) -- (6, -1) -- (6, 1) -- (4, 1) -- (4,0);
    \node at (6, 0) {};
    \draw (5.5,0) -- (6.5,0);
\end{tikzpicture}
\caption{Knot $11a1$ in an $11$-crossing alternating projection.}\label{F:k11a1r}
\end{center}
\end{figure}
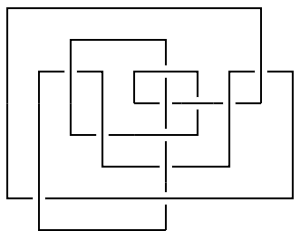

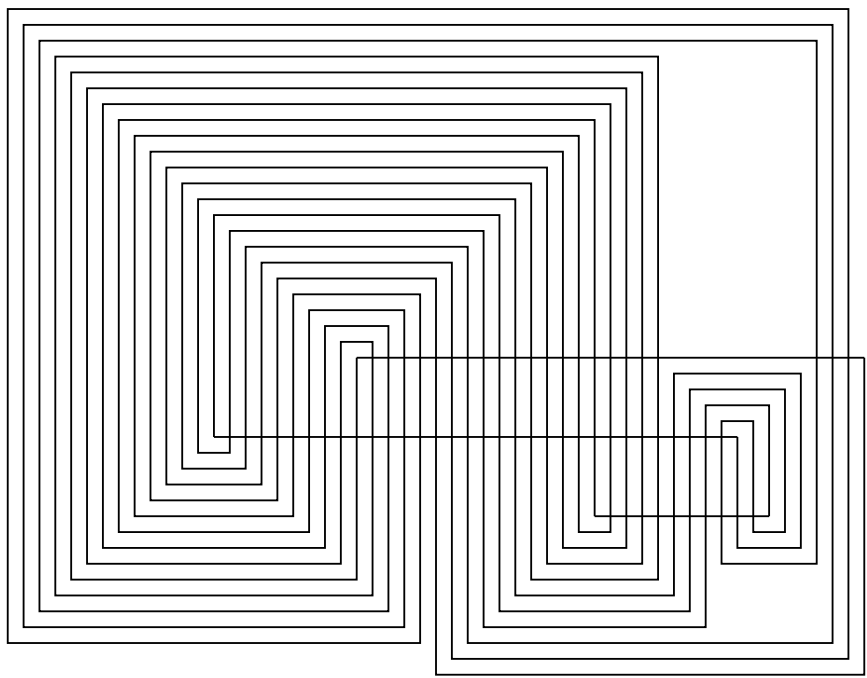
\begin{figure}
\begin{center}
\begin{tikzpicture}[x=6pt, y=6pt, draw=black, line width=.7pt]
    \draw (75, 9) -- (43, 9);
    \draw (69, -1) -- (58, -1);
    \draw (34, 4) -- (67, 4);    
    \draw (43, 9) -- (43, -5) -- (25, -5) -- (25, 27) -- (61, 27) -- (61, -4) -- (55, -4) -- (55, 21) -- (31, 21) -- (31, 1) -- (37, 1) -- (37, 15) -- (49, 15) -- (49, -10) -- (74, -10) -- (74, 31) -- (21, 31) -- (21, -9) -- (47, -9) -- (47, 13) -- (39, 13) -- (39, -1) -- (29, -1) -- (29, 23) -- (57, 23) -- (57, -2) -- (59, -2) -- (59, 25) -- (27, 25) -- (27, -3) -- (41, -3) -- (41, 11) -- (45, 11) -- (45, -7) -- (23, -7) -- (23, 29) -- (72, 29) -- (72, -4) -- (66, -4) -- (66, 5) -- (68, 5) -- (68, -2) -- (70, -2) -- (70, 7) -- (64, 7) -- (64, -7) -- (52, -7) -- (52, 18) -- (34, 18) -- (34, 4);    
    \draw (58, -1) -- (58, 24) -- (28, 24) -- (28, -2) -- (40, -2) -- (40, 12) -- (46, 12) -- (46, -8) -- (22, -8) -- (22, 30) -- (73, 30) -- (73, -9) -- (50, -9) -- (50, 16) -- (36, 16) -- (36, 2) -- (32, 2) -- (32, 20) -- (54, 20) -- (54, -5) -- (62, -5) -- (62, 28) -- (24, 28) -- (24, -6) -- (44, -6) -- (44, 10) -- (42, 10) -- (42, -4) -- (26, -4) -- (26, 26) -- (60, 26) -- (60, -3) -- (56, -3) -- (56, 22) -- (30, 22) -- (30, 0) -- (38, 0) -- (38, 14) -- (48, 14) -- (48, -11) -- (75, -11) -- (75, 9);    
    \draw (67, 4) -- (67, -3) -- (71, -3) -- (71, 8) -- (63, 8) -- (63, -6) -- (53, -6) -- (53, 19) -- (33, 19) -- (33, 3) -- (35, 3) -- (35, 17) -- (51, 17) -- (51, -8) -- (65, -8) -- (65, 6) -- (69, 6) -- (69, -1);
\end{tikzpicture}
\caption{Knot $11a1$ in a $3$-bridge projection. At all crossings, the vertical strand is above.}\label{F:k11a1b}
\end{center}
\end{figure}

\section*{Acknowledgments}

This article is the result of discussions with many participants from the 2012 workshop "Invariants in Low-Dimensional Topology and Knot Theory" at the Mathematisches Forschungsinstitut Oberwolfach. The author is grateful to the institute and the participants. In particular, J. Scott Carter, Heather Dye, Slavik Jablan, Lou Kauffman, and Vassily Manturov each made specific suggestions that have been incorporated into this paper and improved its clarity. The anonymous reviewer also made helpful suggestions, and the author is grateful to him or her.
 
\bibliographystyle{hplain}
\bibliography{represent}

\end{CJK*}\end{document}